\newcommand{\ds}{\displaystyle}
\theoremstyle{plain}
\newtheorem{theorem}{Theorem}[section]
\newtheorem{lemma}[theorem]{Lemma}
\theoremstyle{remark}
\newtheorem{definition}{Definition}
\newtheorem{remark}{Remark}[section]
\numberwithin{equation}{section}
\numberwithin{theorem}{section}
\numberwithin{remark}{section}
\numberwithin{assumption}{section}
\numberwithin{condition}{section}
\begin{document}

\title{Resonance and Periodic Solutions for \\  Harmonic Oscillators with General Forcing}
\author{%
\begin{tabular}[t]{c@{\extracolsep{6em}}c}
Isaac Benson\footnote{1000 Hilltop Dr., University of Maryland, Baltimore County, Baltimore, MD, 21250, USA} & Justin T. Webster$^*$   \\
\textit{UMBC} & \textit{UMBC}   \\
\textit{ibenson1@umbc.edu} & \textit{websterj@umbc.edu} \\
\end{tabular}%
}

\maketitle

\begin{abstract}
\noindent We discuss the notion of resonance, as well as the existence and uniqueness of periodic solutions for a forced simple harmonic oscillator. While this topic is elementary, and well-studied for sinusoidal forcing, this does not seem to be the case when the forcing function is general (perhaps discontinuous). Clear statements of theorems and proofs do not readily appear in standard textbooks or online. For that reason, we provide a characterization of resonant solutions, written in terms of the relationship between the forcing and natural frequencies, as well as a condition on a particular Fourier mode. While our discussions involve some notions from $L^2$-spaces, our proofs are elementary, using this  the variation of parameters formula; the main theorem and its proof should be readable by students who have completed a differential equations course and have some experience with analysis. We  provide several examples, and give various constructions of resonant solutions. Additionally, we connect our discussion  to  notions of resonance in systems of partial differential equations, including fluid-structure interactions and partially damped systems. 
\noindent  
\vskip.25cm

\noindent {\bf Keywords}: simple harmonic oscillator, resonance, periodic solutions, discontinuous forcing
\vskip.25cm
\noindent
{\em 2020 AMS}: 35C25, 34A36, 34C27, 70J35, 35B34
\vskip.4cm
\noindent Acknowledgements: The first author was partially supported by NSF DMS-1908033; the second author was partially supported by NSF DMS-1907620 and DMS-2307538. The authors thank Kaitlynn Lilly for her helpful comments which improved the exposition. 

\end{abstract}

\section{Introduction}
\subsection{Background}
The emergence of periodicity and the possibility of resonances in complex mechanical systems has been a topic of historical interest, continuing to the present day \cite{galdi,dowell}. This is particularly true in the field of fluid-structure interaction (FSI) systems \cite{sunny1,sunny2,book,srd,giusy}. Indeed, periodicity can be introduced into a multi-physics system through periodic inputs, e.g., \cite{galdi, sebastian} such as blood flow through an artery or organ \cite{sunny1,sunny2}; periodicity can also arise through bifurcations in what are deemed ``self-excitations", such as aeroelastic flutter, e.g., \cite{dowell,hhww}. In the former case, the entire circulatory system responds to the periodicity of the heartbeat; in the latter, the presence of a laminar airflow about an elastic structure may give rise to violent oscillations.
\vskip.5cm
\begin{center}
  \includegraphics[scale=.85]{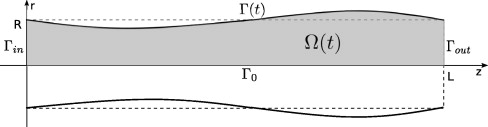}  \\   \quote{\small Fluid-structure interaction (FSI) in a time-evolving spatial domain modeling blood flow through a compliant artery \cite{sunny1}. The inlet boundary condition at $\Gamma_{in}$ is often taken to be a timer periodic function.}\end{center} \vskip.5cm
\begin{center}
\includegraphics[scale=.28]{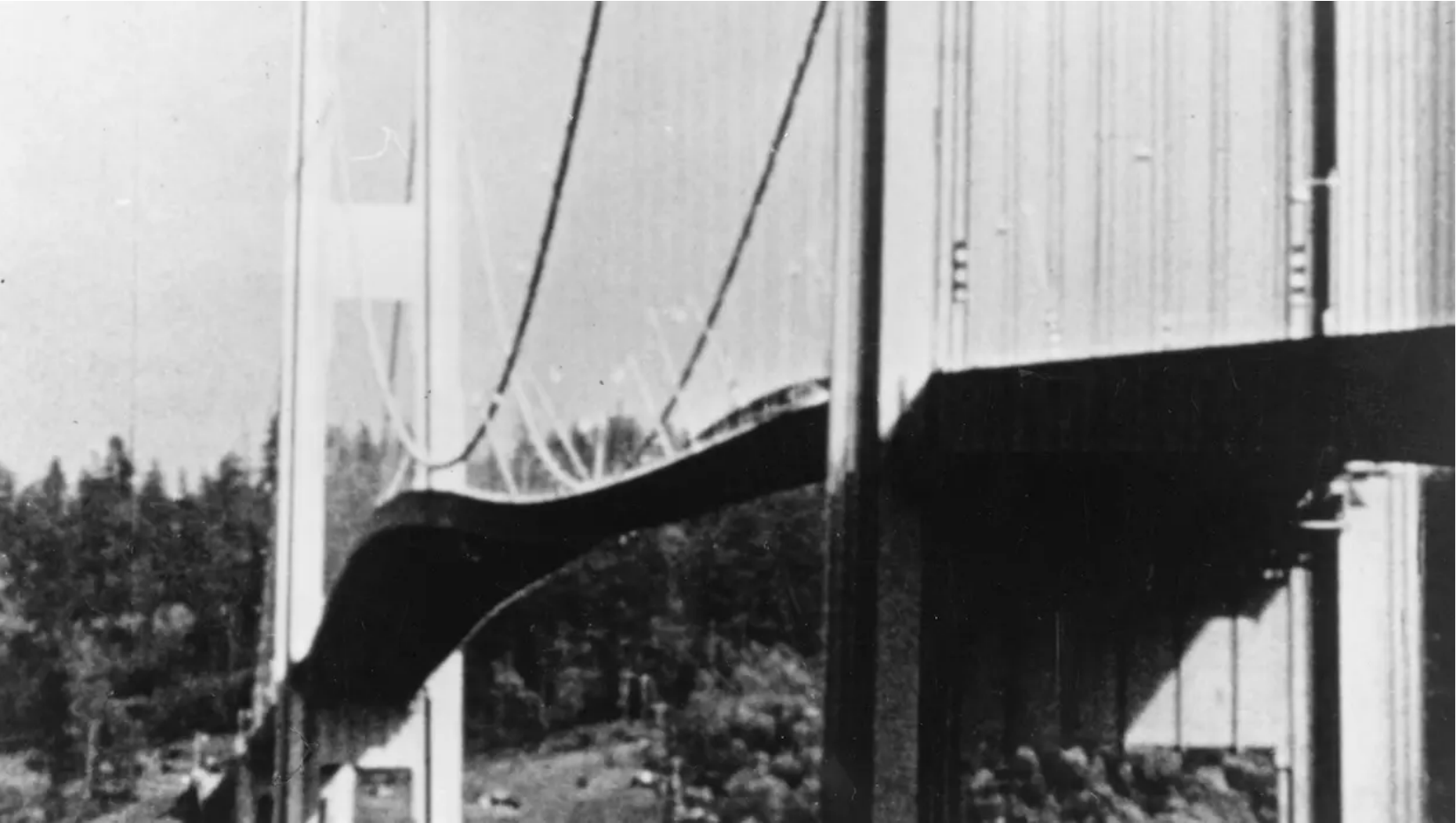} \hskip1.5cm \includegraphics[scale=.17]{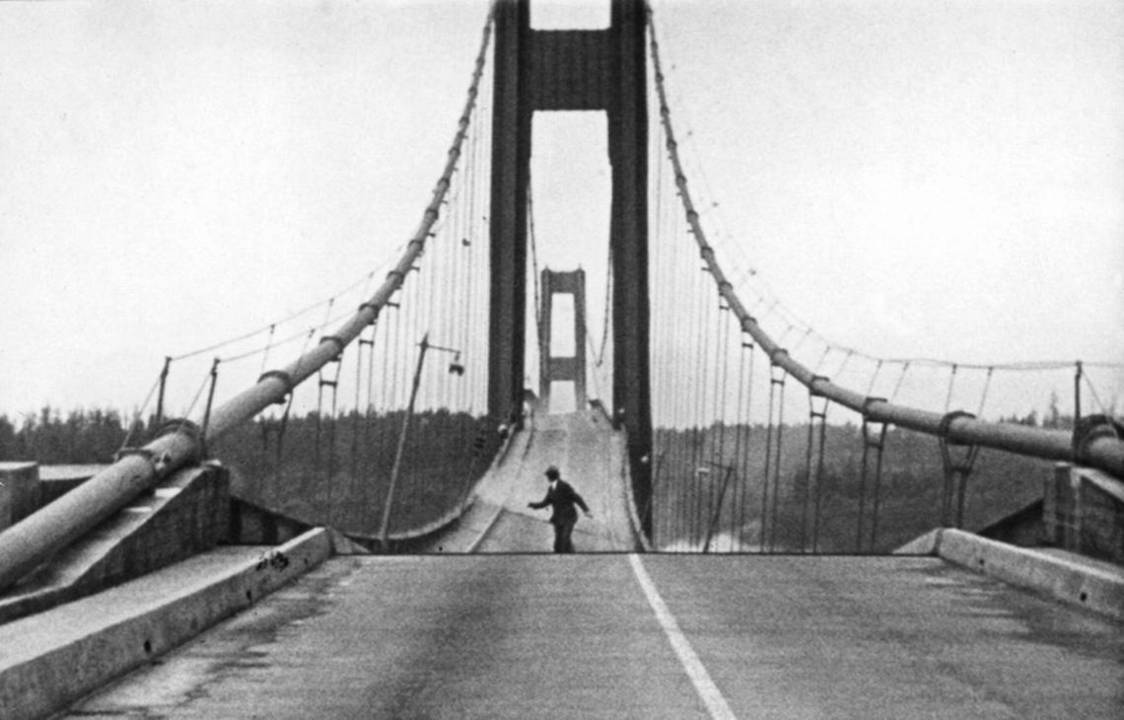} 
  \quote{\small Tacoma Narrows suspension bridge disaster, where periodicity emerged in this flow-structure system, resulting in dramatic structural failure.  [Image Credit: Library of Congress (L); Wikimedia Commons (R)]}
\end{center}
\vskip.5cm

\noindent Such considerations motivate sophisticated problems in the theory of partial differential equations (PDEs) and dynamical systems. These involve hyperbolic systems and hyperbolic-parabolic coupled systems, including the aforementioned FSIs \cite{galdi,msw}. Periodic solutions in the  context of challenging nonlinear FSI models is an emerging topic of interest \cite{sebastian,casanova,msw,giusy,srd}. While the theory of periodic solutions---existence and uniqueness thereof resulting from periodic forcing---is well-understood for parabolic problems (e.g., heat and Stokes equations) \cite{bostan,casanova,fuhrman}, constructing resonances in systems with a hyperbolic component \cite{brezis,coron,breziscoron} or, conversely, understanding periodic solutions, is more challenging \cite{galdi}. However, it is of fundamental importance to understand periodicity in both settings, if one is to rigorously analyze periodicity in coupled systems. 

\subsection{Motivating Problems} 
Working in conjunction with  undergraduate research students, we recently studied periodicity for a 1D heat-wave interaction on the spatial domain $x \in [0,L_1]\cup [L_1,L_2]$ with interface $x=\{L_1\}$. 
Let $v(x,t)$ denote a heat-type, i.e., fluid velocity, variable, and $u(x,t)$ denote a wave-type, i.e., structural displacement, variable. Then
the following PDE system is a scalar idealization of a fluid-structure interaction \cite{zuazua,galdi,msw}:
\begin{align}
\label{1Dsys}
	\dot v- v''&=f_F\;~~{\rm in}\;  (L_1,L_2)\times (0,T),
	\\
	\label{eq:solid1d}
	 \ddot u- u''&=f_S\;~~{\rm in}\; (0,L_1)\times (0,T),
	\\
	\dot u &=v\;~~{\rm on}\;  \{x=L_1\}\times (0,T),
	\\
	u'&= v'\;~~{\rm on}\; \{x=L_1\}\times (0,T),\\
	u & \equiv 0\;~~{\rm on}\; \{x=0\}\times (0,T),\\
		v & \equiv 0\;~~{\rm on}\;  \{x=L_2\}\times (0,T). \label{1Dsysend}
\end{align}
The notation above takes dots for time derivatives, and primes for spatial derivatives. 
The function $f_F$ is a ``finite-energy" time-periodic fluid forcing, distributed on $x \in (L_1,L_2)$, while $f_S$ is a similar forcing, distributed on $x \in (0,L_1)$. 

During a summer project, a group of motivated undergraduates worked on this problem spectrally and numerically. Their  aim was to work toward a resolution of the {\em open problem} \cite{galdi,zua1} of the possibility of resonance (and the existence and uniqueness of periodic solutions) to the 1D heat-wave system \eqref{1Dsys}--\eqref{1Dsysend}. As a byproduct, a foundational question arose: \begin{quote} Can we characterize resonances for the wave equation using Fourier-mode decomposition?\end{quote} 

With this background, we aimed  to construct  resonant solutions for 1D hyperbolic problems. We first considered time-periodic forcing $F_T(x, t)$ in the system in $w(x,t)$, as given by
\begin{equation}\label{wave} \ddot{w}-c^2 w''=F_T.\end{equation}
Here, $c>0$ represents a characteristic wave speed, and $T$ is the time-period of the spatially-distributed forcing, $F_T(x, t)$. 
The class of forcing arises from the physics of the problem, and {\em need not be continuous in time or space}---in practice, such a force need only satisfy {\em finite energy constraint(s)} such as being {\em time-space square-integrable}, denoted by $F_T \in L^2((0,T)\times \Omega)$. Through a spectral (or {\em modal}) approach, we may formally expand the solution as \begin{equation}\label{expansion}w(x,t) = \sum_{j} \phi_j(x)g_j(t),\end{equation} where the family $\{\phi_j\}$ represent a countable orthonormal basis---e.g., the eigenfunctions of $-\partial_x^2$  defined on  $x \in (0,L)$ with some set physically reasonable boundary conditions. Using a Galerkin procedure with the eigen-basis $\{\phi_j\}$ associated to positive eigenvalues $\{\lambda_j\}$, one may extract an ordinary differential equation (ODE) in each Fourier mode:
\begin{equation}\label{modal}
\ddot g_j+c^2\lambda_j g_j=f_j,
\end{equation}
where $\ds f_j(t)=\int_{0}^L F_T(x,t)\phi_j(x)dx$. If $F_T$ is chosen in an appropriate function class, these ODEs ``decouple" and can be solved independently, and reconstituted to form a solution representation for \eqref{wave} through the series \eqref{expansion}.  

To analyze resonant solutions in this framework, one may then work in the previously described modal context. In particular, we should have a broad understanding of periodic solutions for the simple harmonic oscillators acting in each mode (i.e., for each $j$ in \eqref{modal}) above. However, unlike classical forced oscillators which one  encounters in an elementary  ODE course, we must permit our forcing functions $f_j$ to be temporally square-integrable functions (denoted by $L^2(0,T)$) or perhaps only {\em bounded} functions (denoted by $L^{\infty}(0,T)$). In particular, this will  include the possibility of {\em discontinuous forcing}, and functions that exhibit {\em asymmetry on their period}. In particular, $f_j(t)$ need not be sinusoidal.

\begin{remark} We conclude this motivation section with a remark on the motivating  hyperbolic-parabolic PDE problem presented in  \eqref{1Dsys}. The theory described below (from \cite{galdi}) applies in that spatially 1D case, similarly as  in Section \ref{damped}. For the 1D system \eqref{1Dsys} \cite{zua1} (and higher dimensional versions \cite{zuazua}), homogeneous solutions are not uniformly {\em exponentially stable} in the sense of the solution semigroup \cite{book}. The abstract theory of \cite{galdi} cannot, then, guarantee that for time-space square-integrable forcing functions $f_S$ and $f_F$, both of period $T$, there exists a unique periodic solution of period $T$. Hence, the possibility of resonance is not excluded in this case. This problem has been recently investigated in all dimensions in \cite{msw}, and resonance has been excluded for smoother (in time) forcing functions, connecting to the {\em strong stability} (``generic") result in \cite{galdi}.
\end{remark}

\subsection{Problem at Hand}
Motivated by the above discussion, we consider  the simple mass-spring system in the displacement variable $x(t)$ with $x: [0,\infty) \to \mathbb R$, modeled through Hooke's and Newton's Second laws by
\begin{equation}\label{oscillator} \begin{cases}
\ddot{x}+\omega_0^2x=f(t)  \\
x(0)=x_0;~~\dot{x}(0)=x_1.
\end{cases}
\end{equation}
We consider initial data $x_0,x_1 \in \mathbb R$, and the natural (angular) frequency $\omega_0$ is given by $\omega_0^2=k/m$, where $m,k>0$ are oscillator mass and spring constants. The forcing  function $f$ will be of the form of $f(t) = f_T(t)$, namely a periodic function of {\em minimal} period\footnote{different authors use varied terminology such as essential, minimal, fundamental; minimality excludes constant functions from this consideration} $T>0$, {\em which is not necessarily continuous}; the subscript $T$ serves to emphasize the $T$-periodicity of $f_T$. To each such period $T>0$ we associate a frequency $\omega$, obeying the relation $\omega=\frac{2\pi}{T}$.  In what follows, we will consider $T$ (and thus $\omega$) to be variable, to be specified in the hypotheses of the main theorem. 
\begin{remark} In this note, we use the convention that the natural angular frequency is denoted $\omega_0$ and has units of $\text{rad}/\text{sec}$, in contrast to the so called rotational frequency $\nu = \frac{\omega_0}{2\pi}$, which has units of $\text{cycles}/\text{sec}$ ($\text{Hz}$). In this notation, the period $T=\frac{1}{\nu}=\frac{2\pi}{\omega_0}$. \end{remark}

Of course, sinusoidal forcing such as $f_{2\pi/\omega}(t) = A_0\cos(\omega t)$ is a well-studied and exposited  topic in elementary ODEs, see, e.g., \cite{farlow,BdP} and also the recent works \cite{gs,willms}. In this case, existence and uniqueness of solutions (and conditions on their periodicity) is classical, whenever $\omega \neq \omega_0$; the criterion for traditional {\em resonance}  is clear \cite{farlow}: when $\omega =\omega_0$ one obtains a so called {\em quasi-periodic solution} which grows unboundedly as $t \to \infty$. In this case, the {\em particular solution} associated to the forcing lies in the span of the fundamental set, $\{\sin(\omega_0t),\cos(\omega_0t)\}$. Using one's preferred solution method, the {\em resonant} particular solution  takes the form 
$$x_p(t) = t[C_1\sin(\omega_0t)+C_2\cos(\omega_0t)],$$ 
where the constants $C_1,C_2 \in \mathbb R$ are determined by the initial conditions, $\omega_0$, and the value $A_0$. We will elaborate on this in Section \ref{classical}. 

Upon reviewing the literature, we were surprised not to find a clear and detailed discussion of {\em resonant} behaviors for the general case of \eqref{oscillator}, when $f_T$ is periodic, but not sinusoidal. Although the topic of resonance is indeed classical, and covered in many undergraduate differential equations, mechanics, and engineering courses, the caveat seems to be that periodic forcing is almost always taken to be smooth, and, typically, sinusoidal. This motivated us to seek  a complete characterization of solutions to \eqref{oscillator}---when we have a periodic response to a periodic, yet not necessarily sinusoidal, forcing $f_T$.  Preliminary ODE solves, reproduced here, produced behaviors which are somewhat different than the classical sinusoidal case. We also mention \cite{ortega}, which focuses on Littlewood's problem; in the introduction there, an informal discussion of resonant conditions akin to our main result is made without further reference or proof.

In the present note, {\em we address existence and uniqueness of periodic solutions with general periodic forcing}. Along the way, we will provide elucidation of resonant scenarios. In practice we are motivated to consider $f_T \in L^{2}(0,T)$, however, we will see that we can accommodate functions $f_T$ for which the classical {\em variation of 
parameters formula} is valid. Namely, for $x_0=x_1=0$ in \eqref{oscillator}, we have a particular solution representation \cite{farlow}:
\begin{equation}\label{farlow} x(t)=\dfrac{1}{\omega_0}\int_0^tf(\tau)\sin\left(\omega_0(t-\tau)\right)d\tau.\end{equation}
This provides a solution  for less regular forcing functions, $f(t)$; namely, if $f$ is integrable (in some sense), then we can make sense of the particular solution $x(t)$ above as well as $\dot x(t)$ as continuous functions. Even if $x(t)$ does not satisfy \eqref{oscillator} classically, we can still determine the mapping $f(t) \mapsto x(t)$ and ascertain whether or not the associated response is (i) periodic, (ii) bounded but not periodic, or (iii) resonant (unbounded). The function $x(t)$ produced will of course satisfy the system \eqref{oscillator}, but in a generalized setting.

Our central criteria and main discussions here will avoid complexities associated with the point-wise (or smoother) convergence of Fourier series. Rather,  by focusing on the variation of parameters formula \eqref{varpar}, instead of Fourier expansions of $f(t)$ and $x(t)$, we are able to state and prove a criterion for periodicity in an entirely elementary fashion. Our main result below is Theorem \ref{th:main}; its statement, {\em as well as its proof}, should be accessible to any student who has completed a first course in ODEs and some analysis. The discussion about our result in the sequel should be readable to those who have had some experience in PDEs and Fourier series. \begin{remark} We will use some discussion of Fourier series to motivate our criteria and to cross-reference  our result. Indeed, Fourier expansions provide good intuition for what is happening at the level of the forcing, when $f_T \in L^2(0,T)$. Yet, we will avoid {\em proving} anything which requires a more advanced result on the convergence (in any sense) of Fourier series.\end{remark}

\section{Sinusoidal Forcing}\label{classical}
In this section we review the notions of classical resonance due to sinusoidal forcing as well as the notion of beats (modulation). We also discuss their interrelation, and, more broadly,  the precise scenarios when a periodic forcing gives rise to a periodic solution. The question---even in this simple case---is non-trivial, and affected by the relationship between the frequency of the forcing $\omega$, the natural frequency $\omega_0$, as well as the presence of the initial conditions $x_0,x_1$. Finally, we give some brief discussion of the notion of damping (or dissipation) in the traditional forced oscillator to provide some connection with the theory in \cite{msw,mmsw,galdi}. 

\subsection{Forced Oscillators, Beats, and Resonance}
Let us consider a simple motivational case, where we explicitly take $f_{\frac{2\pi}{\omega}}(t) = A_0 \cos(\omega t)$. The parameter $\omega$ is free to take any values in $\mathbb R_+ \equiv (0,\infty)$. Namely, we  consider
\begin{equation}\label{oscillator**}\ddot{x}+\omega_0^2x=A_0\cos(\omega t),\end{equation}
for some $A_0 \ge 0$. 
The general solution to the homogeneous equation, i.e., taken with $A_0 = 0$, is 
\begin{equation} 
x_h(t) = c_1 \cos(\omega_0t)+c_2\sin(\omega_0t). 
\end{equation}
The constants $c_1$ and $c_2$ are determined in a particular case by specifying the initial conditions $x(0)=x_0$ and $\dot x(0)=x_1$. Indeed, the associated initial value problem  is {\em well-posed}, which is
to say that solutions exist, are unique, and depend continuously on the  data $x_0,~x_1$. 
Solutions to \eqref{oscillator} obey the  {\em energy identity} (which yields the continuous dependence property)
\begin{equation}
\omega_0^2|x(t)|^2+|\dot x(t)|^2 = \omega_0^2|x_0|^2+|x_1|^2,~~\forall t \in [0,T].
\end{equation}
The solution is uniquely expressed as
\begin{equation}\label{cause}x(t) =x_0\cos(\omega_0t)+\dfrac{x_1}{\omega_0}\sin(\omega_0t).\end{equation}

Returning to the inhomogeneous problem \eqref{oscillator}, with $f(t) = A_0\cos(\omega t)$, we have a particular solution \begin{equation} \label{above} x_p(t) = \dfrac{A_0}{(\omega_0^2-\omega^2)}[\cos(\omega t)-\cos(\omega_0t)]\end{equation}
corresponding to the initial conditions $x(0)=\dot x(0)=0$.
For all $\omega\neq \omega_0$, this function is clearly bounded, though the coefficient $$\dfrac{A_0}{(\omega_0-\omega)(\omega_0+\omega)}$$ amplifies the solution in relation to the proximity of $\omega$ to $\omega_0$. This is connected to the notion of {\em beats}, when $\omega \approx \omega_0$ but $\omega \neq \omega_0$.

When $\omega=\omega_0$, so that $f(t)=A_0\cos(\omega_0t)$, we observe that the solution formula in \eqref{above} becomes singular. However, in the limit $\omega \to \omega_0$ \cite{willms,gs}, one obtains the particular solution:
$$x_p(t)=\dfrac{A_0}{2\omega_0}t\sin(\omega_0t).$$
We note that this solution constitutes an {\em unbounded response} (as $t \to \infty$) to the {\em bounded forcing} $f$. Said differently, the mapping $f\mapsto x$ is unbounded:   the forcing $f \in L^{\infty}(0,\infty)$, while the solution $x(t)$ has a magnitude which becomes arbitrarily large. 
In this case, we obtain the {\em resonant envelope}~ $ \pm \dfrac{A_0}{2\omega_0} t$ for solutions,
which leads to the {\em amplification factor}
$$\mathbf A(t) = \dfrac{A_0}{2\omega_0}t,~\text{ as }~t \to \infty,$$  measuring the solution's growth in time, {\em independent of initial conditions}. If we consider $t=T=2\pi\omega_0^{-1}$, then we can precisely calculate the ``growth per cycle" associated the frequency $\omega=\omega_0$. Lastly, we note that this solution is {\em quasi-periodic} in the sense that $x_p$ is a periodic function scaled by the  factor $\mathbf A$, which is increasing in $t$: 
$$x(t) = \mathbf A(t)\sin(\omega_0t).$$

For an example of resonance, take the ODE $$\ddot{x}(t) +  x(t) = \sin(t).$$ In this case $\omega_0=\omega=1$. With $x(0) = \dot x(0) = 0$, the unique solution is $$x(t) = \frac{1}{2} \sin(t) - \frac{1}{2} t \cos(t),$$ which is graphed below.
\begin{center} \includegraphics[scale=0.28]{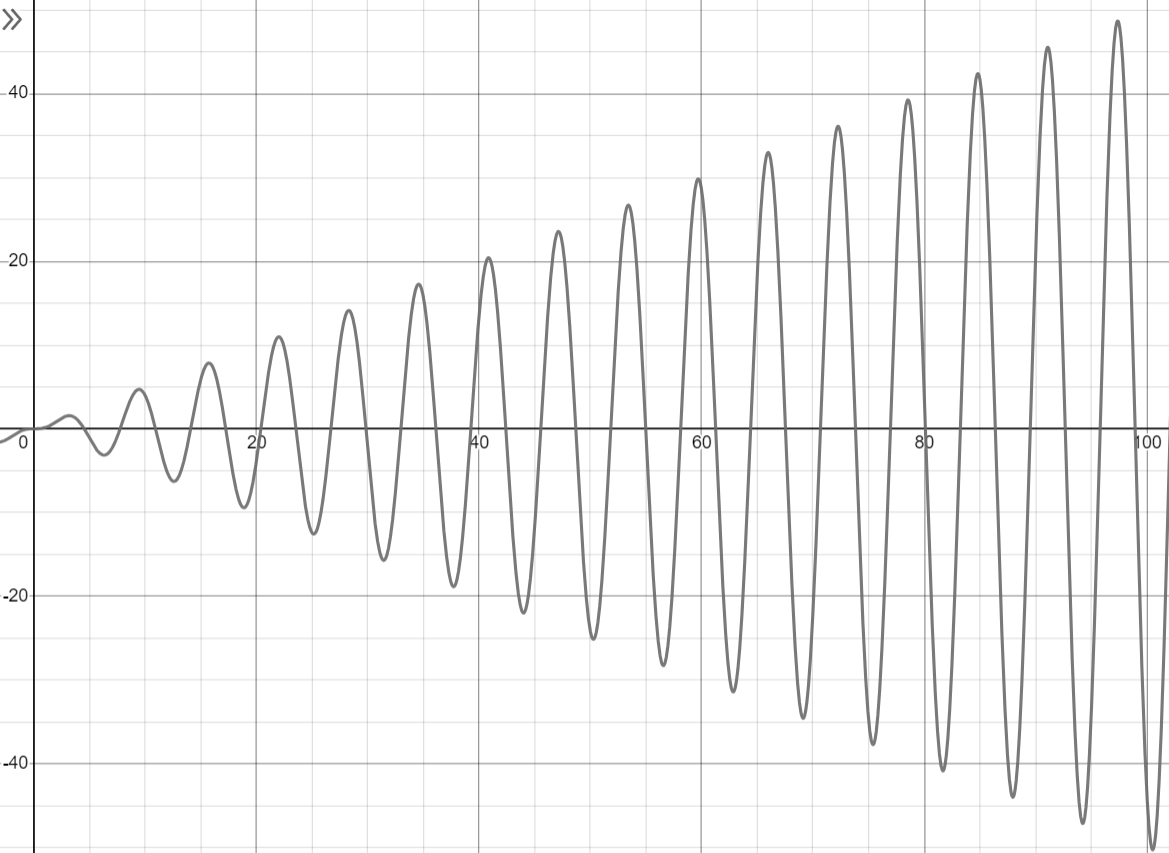} \end{center}
The response $x(t)$ is unbounded as $t \to \infty$. In that sense, the solution is not periodic (since the amplitude grows each cycle), but the quasi-period of the solution is $T=2\pi$. Now, if we modify the frequency of the forcing, so that $\omega =2$ resulting in $$\ddot{x}(t) +  x(t) = \sin(2t),$$ we note that the response is bounded. In this case, the forcing $\sin(2t) \notin \text{span}\{ \sin(\omega_0 t), \cos(\omega_0 t)\}$, and the solution is to the IVP with $x(0)=\dot x(0)=0$ is $$x(t) = \frac{2}{3} \sin(t) - \frac{1}{3} \sin(2t).$$ The solution is graphed below.
\begin{center} \includegraphics[scale=0.4]{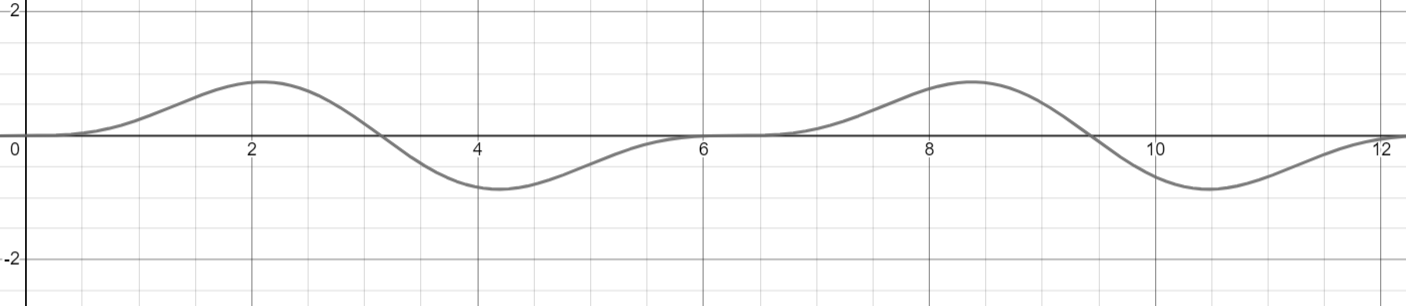} \end{center}

Let us now briefly describe the appearance of {\em beats} in the mass-spring system, and the relation to general periodic solutions. When $\omega \neq \omega_0$ we obtain a solution to \eqref{oscillator**} of the form:
\begin{equation}\label{gensol}
x(t) = x_h(t) + x_p(t)=c_1\cos(\omega_0t)+c_2\sin(\omega_0t)+C_1\cos(\omega t)+C_2\sin(\omega t).
\end{equation}
Such functions have interesting behaviors, and we mention some  non-trivial facts which will be relevant later---see \cite{periodic,periodic3}. Let us introduce the term {\em commensurate}, which occurs when two real numbers are rational multiples of one another.
\begin{theorem}\label{sumprod} Let $f: [0,T_1] \to \mathbb R$ and $g: [0,T_2] \to \mathbb R$ be  {\em continuous} periodic functions, of periods $T_1$ and $T_2$. Both functions can thus be periodically extended from $\mathbb R \to \mathbb R$, so let us identify $f$ and $g$ with those extensions. Then:
\begin{itemize}
\item A linear combination of $f$ and $g$ is periodic iff $T_1$ and $T_2$ are {\em commensurate}. 
\item The same holds, mutatis mutandis, for the product of $f$ and $g$.
\end{itemize}
In particular: if $T_1$ and $T_2$ are not rational multiples of each other, then there is no viable real period for a product or any non-trivial linear combination of $f$ and $g$.
\end{theorem}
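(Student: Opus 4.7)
The plan is to prove each biconditional in both directions, treating the product and linear-combination statements separately but along parallel lines. The forward (``easy'') direction I would dispatch once for both: if $T_1/T_2 = p/q$ with $p, q$ positive integers, then $T := qT_1 = pT_2$ is a common period of $f$ and $g$, and hence of every linear combination $\alpha f + \beta g$ and of the product $fg$. The substance of the theorem then lies entirely in the two converses.

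For the linear-combination converse, I would suppose that $h(t) := \alpha f(t) + \beta g(t)$ is periodic with some positive period $T^*$ and that $\alpha, \beta \neq 0$ (the vanishing-coefficient cases show why ``nontrivial'' must be read into the hypothesis). My key device is the pair of differences
\[
F(t) := f(t+T^*) - f(t), \qquad G(t) := g(t+T^*) - g(t).
\]
A one-line change of variables shows $F$ is continuous, $T_1$-periodic, and has zero mean over $[0,T_1]$, and similarly $G$ is continuous, $T_2$-periodic, with zero mean over $[0,T_2]$. The periodicity of $h$ reads $\alpha F + \beta G \equiv 0$, so $F = -(\beta/\alpha) G$; in particular $F$ admits both $T_1$ and $T_2$ as periods. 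Here I would invoke that if $T_1/T_2$ were irrational, the additive subgroup $T_1 \mathbb{Z} + T_2 \mathbb{Z}$ is dense in $\mathbb{R}$, so by continuity $F$ is constant, and zero mean forces $F \equiv 0$. This yields $f(t+T^*) = f(t)$; since $T_1$ is the minimal positive period of $f$, $T^* = k T_1$ for some positive integer $k$, and by the parallel argument applied to $G$, $T^* = m T_2$. Hence $T_1/T_2 = m/k \in \mathbb{Q}$, contradicting the assumed irrationality.

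The hard direction for products is where I expect the real obstacle, since one cannot freely divide by $f(t)$ or $g(t)$ when they may vanish. My plan here is to assume $fg$ has period $T^*$ and to split on whether $T_1/T^*$ is rational. If $T_1/T^* = p/q \in \mathbb{Q}$, then $qT_1 = pT^*$ is a period of both $f$ and $fg$, so $f(t)\bigl[g(t+qT_1) - g(t)\bigr] \equiv 0$; the closed zero sets of $f$ (which is $T_1$-periodic) and of $\psi(t) := g(t+qT_1)-g(t)$ (which is $T_2$-periodic) must together cover $\mathbb{R}$, and a translation/density argument using the denseness of $T_1\mathbb{Z} + T_2\mathbb{Z}$ in $\mathbb{R}$ forces one of them to be all of $\mathbb{R}$, returning one to the linear-combination situation and producing the desired commensurability. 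If instead $T_1/T^*$ is irrational, I would apply a Weyl-type equidistribution argument to $(fg)^2$ along $\{t + nT_1\}_{n \ge 0}$, obtaining the pointwise identity $f^2(t)\,\overline{g^2} \equiv \overline{(fg)^2}$ (the bars denoting means over a single period), which forces $f^2$ and hence $f$ to be constant, contradicting the minimality of $T_1$. Because the product case touches the classical theory of periodic and almost-periodic functions, for the technical details of the zero-set analysis I would defer to \cite{periodic, periodic3}.
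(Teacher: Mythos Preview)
The paper does not prove Theorem~\ref{sumprod}; it is stated as a known fact with a pointer to \cite{periodic,periodic3} and then used. There is therefore no ``paper's own proof'' to compare against, and your proposal already does more than the paper attempts.

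On the substance: your linear-combination argument is correct and clean. The difference $F(t)=f(t+T^*)-f(t)$ is continuous, $T_1$-periodic with zero mean, and the relation $\alpha F+\beta G\equiv 0$ forces $F$ to carry both periods; density of $T_1\mathbb{Z}+T_2\mathbb{Z}$ in $\mathbb{R}$ then makes $F$ constant, hence identically zero, and the contradiction follows via Lemma~\ref{lemma3}. For the product, your two-case split is a workable plan. In the rational case, the covering $\{f=0\}\cup\{\psi=0\}=\mathbb{R}$ together with the $T_1$- and $T_2$-periodicity of these closed sets does force one of them to be all of $\mathbb{R}$ (pick $a\notin\{f=0\}$; then $a+T_1\mathbb{Z}\subset\{\psi=0\}$, and $T_2$-invariance plus closedness give $\{\psi=0\}=\mathbb{R}$). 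In the irrational case, your Weyl-averaging of $(fg)^2$ along $t+nT_1$ is sound, but you should note that the minimal periods of $g^2$ and $(fg)^2$ divide $T_2$ and $T^*$ respectively, so the irrationality hypotheses needed for equidistribution are preserved; also, $f^2$ constant implies $f$ constant only via continuity (no sign change without a zero), which you should make explicit. Since you ultimately defer the product-case technicalities to \cite{periodic,periodic3}, you end in the same place the paper begins.
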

Now we return to \eqref{gensol} with $A_0>0$. Let $\omega_0 = \frac{2\pi}{T_1}$ and $\omega = \frac{2\pi}{T_2}$. From the first fact above, when $\exists~m,n \in \mathbb Z$ so that $nT_1=mT_2$---and thus $\omega$ and $\omega_0$ are rational multiples of one another---the solution $x(t)$ is itself a periodic function. This periodicity is observed {\em independent of the values of $x_0$ and $x_1$}, though the resulting period may be affected by whether or not $c_1=c_2=0$ in \eqref{gensol}. If the initial conditions $\widetilde{x_0} =\dfrac{A_0}{\omega_0^2-\omega^2}$ and $\widetilde{x_1}=0$ are selected, then the unique solution to \eqref{oscillator**} is 
$$x(t)=\dfrac{A_0}{\omega_0^2-\omega^2}\cos(\omega t),$$ and thus has no contribution from the homogeneous solution of frequency $\omega_0$. (Note: In this particular case, the relationship between $\omega_0$ and $\omega$ actually does not matter, since the initial conditions eliminate the effect of $x_h$.) On the other hand, if $c_1c_2\neq 0$, then the period of the sum  in \eqref{gensol} is given by $T_3=nT_1=mT_2$. This can be made explicit. Namely, if we consider zero initial conditions $x_0=x_1=0$ for \eqref{oscillator**}, then the unique solution in \eqref{above} can be rewritten via a sum-to-product identity as
\begin{equation}\label{beat} x(t) = \dfrac{2A_0}{(\omega_0^2-\omega^2)}\sin\left(\dfrac{(\omega_0-\omega)}{2}t\right)\sin\left(\dfrac{(\omega_0+\omega))}{2}t\right)
\end{equation}
In this presentation, we can see that the above solution has {\em amplitude modulation}. Indeed, the function $x(t)$ will be periodic with a period related to $\dfrac{4\pi}{|\omega_0-\omega|}$; the first sinusoid {\em modulates} the amplitude of the second, which itself has  period $\dfrac{4\pi}{\omega_0+\omega}$. In this way, we can think of the {\em beat phenomenon} giving rise to {\em resonance} as $\omega \to \omega_0$. In this case, the amplitude modulation factor goes to $+\infty$ while the solution's period in \eqref{beat} also goes to $+\infty$; in other words, resonance occurs when $\omega$ reaches $\omega_0$, resulting in an infinitely long beat which approaches infinite amplitude. 
\vskip.2cm
\begin{center}
\includegraphics[scale=.3]{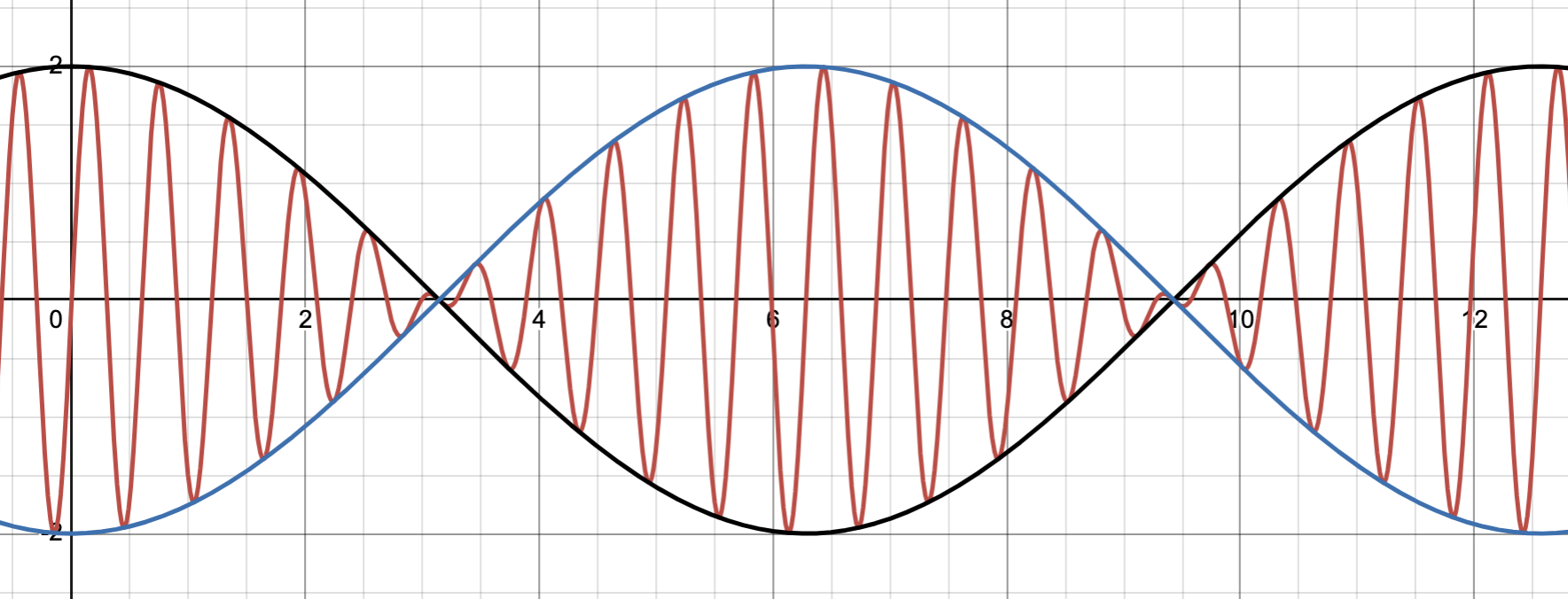}
\begin{quote} \small Above we see the sum of two sinusoids, $\sin(10t)$ and $\sin(11t)$, resulting in $\sin\left([10.5]t\right)$ modulated by $\sin\left([.5]t\right)$, resulting in the beat phenomenon. 
\end{quote}
\end{center}
\vskip.2cm
\begin{center}
\includegraphics[scale=.35]{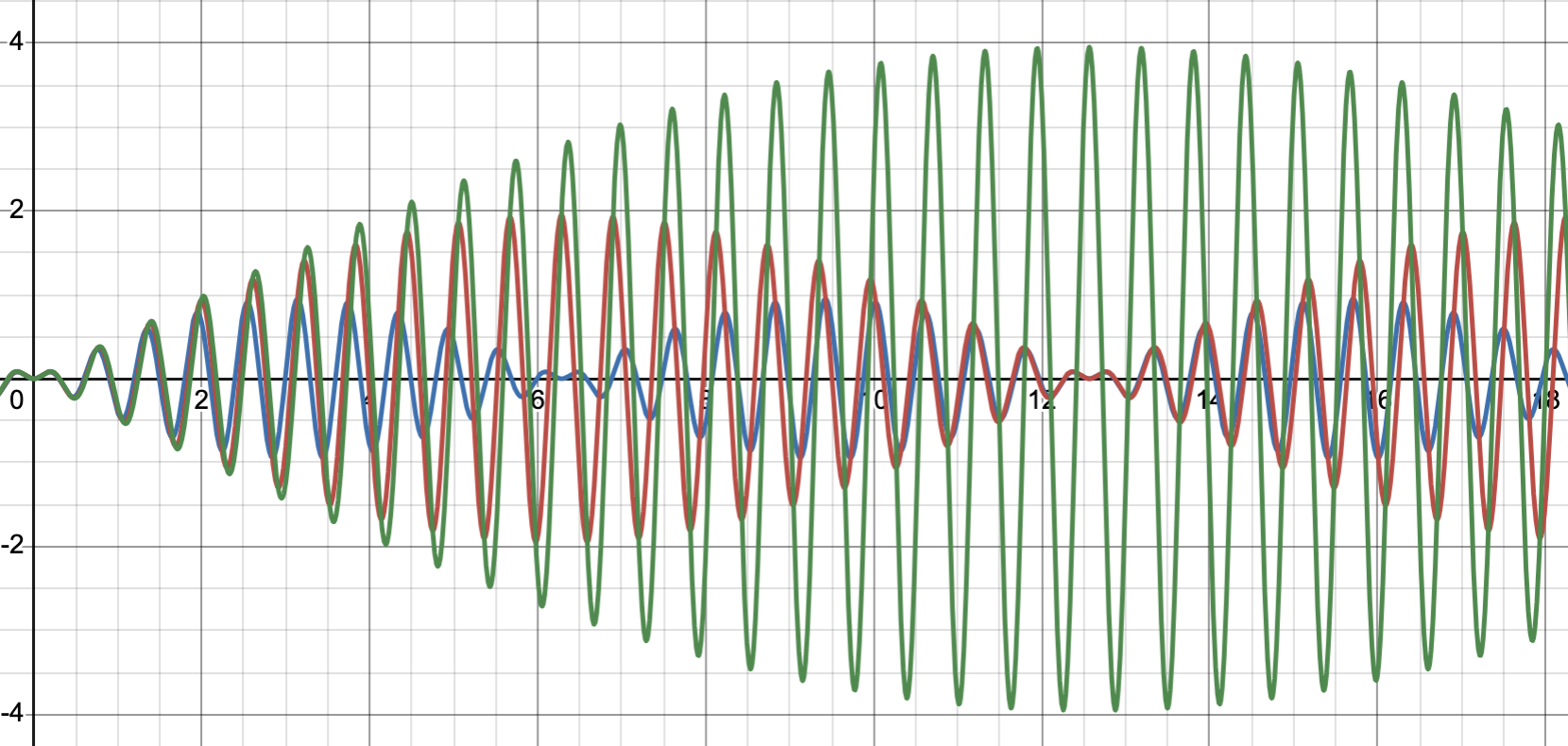}
\begin{quote} \small Above we observe the beats approaching resonance in \eqref{beat}, as the natural and forced frequencies begin to coalesce. In blue, we have $\omega_0=10$ and $\omega=11$; in red we have $\omega_0=10$ and $\omega=10.5$; finally, in green, we have $\omega_0=10$ and $\omega=10.25$. We note that the amplitude of \eqref{beat} grows as the modulated sinusoid spreads out.
\end{quote}
\end{center}

\subsection{Discussion: Existence and Uniqueness of Periodic Solutions}
Let us consider the unique solution to the full non-homogeneous (Cauchy) initial value problem in \eqref{oscillator}, with $f(t)=A\cos(\omega t)$ with $\omega \neq \omega_0$, represented by
\begin{equation}\label{thisone}
x(t) = x_h(t)+x_p(t) = x_0\cos(\omega_0t)+\dfrac{x_1}{\omega_0}\sin(\omega_0t)+  \dfrac{A_0}{(\omega_0^2-\omega^2)}[\cos(\omega t)-\cos(\omega_0t)].
\end{equation}
In the resonant case, when $\omega=\omega_0$, we also have the unique solution
\begin{equation}\label{resonantsol}
x(t) = x_h(t)+x_p(t) = x_0\cos(\omega_0t)+\dfrac{x_1}{\omega_0}\sin(\omega_0t)+\dfrac{A_0}{2\omega_0}t\sin(\omega_0t).
\end{equation}
We make some summative remarks for these solutions which allude to more general discussions in the next section.

Above, the forcing function $f(t)=A_0\cos(\omega t)$ and the representation of the solution $x_p$ satisfy a certain {\em compatibility condition}: the chosen particular solution $x_p$ contributes null initial conditions. This allows $x_h$ to subsume the role of (uniquely) achieving the initial conditions $x(0)=x_0$ and $\dot x(0)=x_1$. In more generality, for a given right hand side $f(t)$, one must be cognizant of the contributions that a particular solution, $x_p$, may make to the solution's initial conditions. These contributions may be relevant in determining periodicity, or in determining the specific period, of the resulting solution $x(t)$. 

We observed  cases:
\begin{itemize}
\item ($\omega_0=\omega$, $A_0>0$) The solution \eqref{resonantsol} is resonant for all initial conditions, and so no periodic solution exists. The homogeneous solution $x_h$ is periodic and the particular solution $x_p$ is quasi-periodic, both with period $\omega_0$.
\item ($\omega_0\neq \omega$, $A_0>0$) There are certain initial conditions, say $\widetilde{x_0}, ~\widetilde{x_1}$, which determine a periodic solution of period $T=2\pi\omega^{-1}$. If $\omega$ is a rational multiple of $\omega_0$, then for all initial conditions the solution is periodic, as the sum of commensurate sinusoids. When $\omega$ is not a rational multiple of $\omega_0$ (and again excepting the initial conditions $\widetilde{x_0}, ~\widetilde{x_1}$), the solution is not periodic of any period owing to Theorem \ref{sumprod}. 
\end{itemize}
Another way of looking at the ODE
$$\ddot{x}+\omega_0^2x=A_0\cos(\omega t)$$
is in terms of the existence and uniqueness of a periodic solution {\em with the same period as the forcing}, namely $T=\frac{2\pi}{\omega}$.
We can present the following dichotomy: 
\begin{itemize} 
\item In the non-resonant case ($\omega \neq \omega_0$), {\em there is at least one periodic solution of period $T=\frac{2\pi}{\omega}$} for which the associated  initial conditions $\widetilde{x_0}, ~\widetilde{x_1}$ can be recovered from \eqref{thisone},
namely  $\widetilde{x_0}=\frac{A_0}{\omega_0^2-\omega^2}$ and $\widetilde{x_1}=0$. However, uniqueness of a solution with this period depends on the particular relation between $\omega$ and $\omega_0$. (For instance, if $\omega$ is an integer multiple of $\omega_0$, the resulting solution will have infinitely many solutions with frequency $\omega$.) 
\item In the resonant case ($\omega=\omega_0$), there does not exist a periodic solution of period $T=\frac{2\pi}{\omega}$ for any initial conditions.
\item As we will discuss below in Section \ref{dampedsec}, in the case of a damped oscillator, given a sinusoidal forcing of frequency $\omega$, there will be exactly one set of initial conditions which provide the (unique) periodic solution of frequency $\omega$---all other solutions {\em will not be periodic}. 
\end{itemize}
This way of thinking is associated to the abstract question found in \cite{galdi}, namely: Given a forcing $f_T$ of period $T$, when can we ensure that there exists a unique periodic solution of period $T$? In this question, the initial conditions are thought of not as data, but as part of the  periodic response.

%Considering the solution in \eqref{thisone} above, we have described when it demonstrates periodicity: when $\omega_0$ and $\omega$ are commensurate, but not equal. In some sense, this is a compatibility condition between the intrinsic (natural) frequency $\omega_0$ and the forced frequency $\omega$. However, in more generality, one will have to determine precisely when the sum of $x_h$ and $x_p$ yields a periodic solution. Moreover, determining the associated period is non-trivial, as we shall see.
%
%Motivated by the previous points, we note that, it may be necessary to suppress the homogeneous part of the solution in order to produce a properly periodic solution. This alludes to a more general feature of problems involving periodicity: Is the desired goal: (1) a solution to a prescribed Cauchy problem, where $x_0$ and $x_1$ are given, and a compatibility condition is placed on a forcing $f$ to ensure a proper solution exists? Or, (2) a (perhaps unique) periodic solution ascribed to given forcing $f$, which may then determine the ``correct" initial conditions $\widetilde{x_0}$ and $\widetilde{x_1}$?

\subsection{Damped Oscillators and Some Comments on Dissipative Systems}\label{dampedsec}
To provide some additional context, we briefly consider damped oscillators. In particular, we will demonstrate why damped systems are easier to analyze from the point of view of periodic solutions. 

Consider a damping coefficient $d>0$ in the oscillator system:
\begin{equation}\label{oscillator***} \begin{cases}
\ddot{x}+d\dot{x}+\omega_0^2x=A_0\cos(\omega t)  \\
x(0)=x_0;~~\dot{x}(0)=x_1.
\end{cases}
\end{equation}
In this case, the general solution is represented by
\begin{equation}\label{damped}x(t)=x_h(t)+x_p(t) = c_1x_1(t)+c_2x_2(t)+C_1\cos(\omega t)+C_2\sin(\omega t).\end{equation}
The constants $c_1, c_2, C_1, C_2$ in \eqref{damped} are uniquely determined from $x_0,x_1,A_0,\omega_0, d$. 
The homogeneous solution $x_h$ obeys the damped energy identity:
$$\omega_0^2|x_h(t)|^2+|\dot x_h(t)|^2+2d\int_0^t|\dot x(s)|^2ds=\omega_0^2|x_0|^2+|x_1|^2.$$
While there are three possible forms of the homogeneous solution (depending on how the damping $d>0$ compares to the quantity $2\omega_0$), in each case,  $\exists~K,\gamma>0$ so that $$|x_h(t)| \le K(x_0,x_1)e^{-\gamma t},~~~t\to \infty.$$ The residual (or {\em steady-state}) solution is given by $x_p(t)=C_1\cos(\omega t)+C_2\sin(\omega t)$ for some $C_1$ and $C_2$, and represents the properly periodic component of the solution. We note that no resonance is possible in this scenario, but when the system is underdamped (so $0 < d < 2\omega_0$), we can maximize the  amplification coefficients $C_1$ and $C_2$. This occurs when the condition $\omega = \sqrt{k/m-d^2/4m^2}$ is exactly satisfied. 

To obtain a properly periodic solution from \eqref{damped}---namely, when the solution to the Cauchy problem is time-periodic, without a decaying component---we must find the unique initial conditions $\widetilde{x_0}$ and $\widetilde{x_1}$ which explicitly make $c_1=c_2=0$, i.e., which zero-out the transient component of the solution $x_h(t)$. By way of example, with $f(t)= A_0\cos(\omega t)$ as in \eqref{oscillator***}, a particular solution is given by
\begin{equation}\label{thisperiodic} x_p(t)= \dfrac{A_0(\omega_0^2-\omega^2)^2}{\omega_0^2-\omega^2+(\omega d)^2}\cos(\omega t) - \dfrac{A_0(\omega_0^2-\omega^2)}{\omega_0^2-\omega^2+(\omega d)^2}\sin(\omega t).\end{equation}
In this case, \begin{align} x_p(0)=~ \dfrac{A_0(\omega_0^2-\omega^2)^2}{\omega_0^2-\omega^2+(\omega d)^2};~~~  \dot x_p(0)=~  -\dfrac{\omega A_0(\omega_0^2-\omega^2)}{\omega_0^2-\omega^2+(\omega d)^2}.\end{align} Hence, to obtain a truly periodic solution, $x_h(0)$ and $\dot x_h(0)$ must be chosen (uniquely, in each particular case as determined by $d>0$) so that
\begin{equation}\label{specialIC}x_h(0)=-x_p(0)~~\text{ and }~~\dot x_h(0) = -\dot x_p(0).\end{equation}

Thus, the damped oscillator case may be compared to the non-resonant case in the previous section; namely, for \eqref{oscillator***}, the forcing uniquely determines the initial conditions which support a unique periodic solution of period $T=2\pi\omega^{-1}$, for any frequency $\omega \in \mathbb R$. If   initial conditions  do not satisfy the compatibility condition in \eqref{specialIC}, one will obtain a unique solution to the Cauchy problem, but it will not be periodic, due to the contribution from $x_h(t)$. Of course, for all $x_0, x_1$, we will observe that $$x(t) = x_h(t)+x_p(t) \to x_p(t), ~\text{ as }~t\to \infty,$$ which is to say  all solutions  converge to the unique periodic solution  \eqref{thisperiodic} associated to the special initial conditions in \eqref{specialIC}. This is precisely a demonstration of the abstract theory presented in \cite{galdi}.

\section{Statement of Main Result and Discussion}
\subsection{Mathematical Preliminaries}
We now discuss some basic concepts from ODEs and Fourier theory which will be relevant to the main theorem. We do not expand upon  these ideas at length, but  present the facts and notation we will use in our work below.

We will work with the space $L^2(0,T)$, where $T>0$. This will be taken (na\"ively) as the vector space of square integrable functions $f: (0,T) \to \mathbb R$. This  is a {\em real} Hilbert space, with inner product and norm defined by:
\begin{align}
(f,g)_{L^2(0,T)} \equiv &~\int_0^Tf(t)g(t)dt,~~\forall~~f,g \in L^2(0,T),\\[.2cm]
||f||^2_{L^2(0,T)} \equiv &~ (f,f)_{L^2(0,T)}. \label{norm}
\end{align}
Although we will not explicitly use facts concerning Fourier series (and their convergence), $L^2$ is a useful function space and will provide convenient notation for what follows. We will also use the notation $C^n([0,T])$ to denote the space of functions $f: [0,T]\to \mathbb R$ which are $n$-times continuously differentiable; differentiation at $x=0$ and $x=T$  is taken only from the right and left, respectively.
We do not assume any concepts from measure theory, but to be more precise, we can  define $L^2(0,T)$ as the completion of the smooth functions in \eqref{norm}. This is to say $L^2(0,T) \equiv [\overline{C^{\infty}(0,T)}]_{L^2(0,T)}$, and so we consider the $C^{\infty}(0,T)$ functions and their limit points in   $||\cdot||_{L^2(0,T)}$. Although we do not expound upon periodic extensions here, or the convergence of Fourier series, we  note from elementary PDEs that there is a connection in $L^2(0,T)$ to smoother periodic functions. Namely,  we can consider a boundary value problem in $s(t)$ for the eigenfunctions of the  differential operator $-\partial_t^2$ defined on $L^2(0,T)$:
\begin{align}
-\partial_t^2s(t)=&~\lambda s(t),~~t \in (0,T), \\[.2cm]
s(0)=s(&T),  ~~\dot s (0)=\dot s(T).
\end{align}
Using the theory of Sturm-Liouville (or the Spectral Theorem for positive, self-adjoint operators with compact resolvent), we have a countable family of orthonormal eigenfunctions, associated to a countable family of positive eigenvalues. These eigenfunctions form a basis for $L^2(0,T)$ and have the property that they are smooth and, in this case, periodic. We can express them  in two ways:
\begin{align*} s_n(t) =&~ a_n\sin\left(\dfrac{2\pi}{T}nt\right)+b_n\cos\left(\dfrac{2\pi}{T}nt\right),~~n \in \mathbb W\\[.1cm]
=& ~c_n\exp\left(\dfrac{2\pi i}{T}nt\right),~~n \in \mathbb Z,\end{align*}
where $a_n,  b_n,$ and $c_n$ are the appropriate Fourier coefficients.
One may use either representation, being careful to note the particular Fourier coefficients in each situation. Using sine and cosine brings the problem squarely in the context of the variation of parameters formula, described below. 
It will be convenient here to recall the orthogonality of the eigenfunctions on $[0,T]$. We recall that whenever $m, n \in \mathbb Z$ with $m \neq n$:
\begin{align}
\left (s_n(t),~s_m(t)\right)_{L^2(0,T)} = & ~0\\
\left (\cos\left(\dfrac{2\pi}{T}nt\right),~\cos\left(\dfrac{2\pi}{T}mt\right)\right)_{L^2(0,T)} = &~ 0\\
\left (\sin\left(\dfrac{2\pi}{T}nt\right),~\sin\left(\dfrac{2\pi}{T}mt\right)\right)_{L^2(0,T)} = & ~0.
\end{align}
Hence the families $$\Big\{\exp\left(\dfrac{2\pi i}{T}nt\right)\Big\}_{n \in \mathbb Z},~~\Big\{\sin(nx/2)\Big\}_{n=1}^{\infty} ~\text{ and }~ \Big\{\cos(nx/2)\Big\}_{n=0}^{\infty}$$ are orthogonal.

We now proceed with a discussion of the classical {\em variation of parameters formula}, based on the so called fundamental set ~$\big\{\sin(\omega_0t),~\cos(\omega_0t)\big\}$
for \eqref{oscillator}, taken with $f_T \equiv 0$. 
A particular solution to the non-homogeneous ODE system  in \eqref{oscillator} is then given by the variation of parameters (or Duhamel) formula
\begin{equation} \label{varpar} x(t)=\dfrac{\sin(\omega_0t)}{\omega_0}\int_0^t\cos(\omega_0\tau)f(\tau)d\tau-\dfrac{\cos(\omega_0t)}{\omega_0}\int_0^t\sin(\omega_0\tau)f(\tau)d\tau,\end{equation}
 taking into account the aforesaid fundamental set. 

In the context of this note, we will consider forcing functions which are {\em piecewise} continuous on their period, and periodic with some given period $T$, so $f=f_T$. We will restrict to functions which have only a {\em finite number of jump discontinuities} on $[0,T]$. The expression  \eqref{varpar} provides the unique (generalized) solution to \eqref{oscillator}: For all $t\ge 0$ 
\begin{equation}\label{formula}
x(t) = x_0\cos(\omega_0t)+\dfrac{x_1}{\omega_0}\sin(\omega_0t)+\dfrac{\sin(\omega_0t)}{\omega_0}\int_0^t\cos(\omega_0\tau)f(\tau)d\tau-\dfrac{\cos(\omega_0t)}{\omega_0}\int_0^t\sin(\omega_0\tau)f(\tau)d\tau,
\end{equation}

The interpretation of the solution, is of course in a generalized sense, since the function $x(\cdot )$ need not be $C^2([0,T])$. This depends, of course, on the regularity of $f(\cdot)$ on its period $0<t<T$. We do note that periodicity and piecewise continuity ensure $f(0)=f(T)$. Moreover, we can restrict the Cauchy problem to intervals of continuity for the given function $f$ and employ standard arguments for continuous forcing functions. On such an interval, say $[a,b]$, we have
\begin{equation}\label{inhomgeident}
\omega_0^2|x(t)|^2+|\dot x(t)|^2 = \omega_0^2|x_0|^2+|x_1|^2+2\int_0^tf(\tau)\dot x(\tau)d\tau,~~\forall t \in [a,b].
\end{equation}
From the standard Gronwall inequality, we obtain, for some $C>0$, that
\begin{equation}\label{gron} \omega_0^2|x(t)|^2+|\dot x(t)|^2 \le Ce^T\left[\omega_0^2|x_0|^2+|x_1|^2+\int_0^T|f(\tau)|^2d\tau\right],~~ t\in [a,b].\end{equation}
The system \eqref{oscillator} with $f:[0,\infty)\to \mathbb R$ bounded and piecewise continuous is well-posed. Existence (in a generalized sense) obtains from \eqref{formula}, which gives $x(\cdot)  \in C([0,T])$, with the first derivative being right continuous (we will say more below). Uniqueness follows from uniqueness of the homogeneous solution, by superposition. Continuous dependence (on the data $x_0, x_1, f_T$) holds---in line with \eqref{gron} above---but is a more subtle issue, owing to the fact that globally, on $[0,T]$, we allow $f$ to have jump discontinuities. 

We now provide some comments about the smoothness of $x(t)$.
First, one may suspect that jumps in $f$ may lead to the same in $\dot x$. However, a calculation with the Fundamental Theorem of Calculus for \eqref{varpar} yields:
\begin{align}
\dot x(t) =&~\dfrac{d}{dt}\left[\dfrac{\sin(\omega_0t)}{\omega_0}\int_0^t\cos(\omega_0\tau)f_T(\tau)d\tau-\dfrac{\cos(\omega_0t)}{\omega_0}\int_0^t\sin(\omega_0\tau)f_T(\tau)d\tau\right]\\
= &~\cos(\omega_0 t)\int_0^t\cos(\omega_0\tau)f_T(\tau)d\tau +\dfrac{1}{\omega_0}\sin(\omega_0t)\cos(\omega_0t)f_T(t) \\ & +\sin(\omega_0t)\int_0^t\sin(\omega_0 \tau)f_T(\tau)d\tau-\dfrac{1}{\omega_0}\cos(\omega_0 t)\sin(\omega_0t)f_T(t) \nonumber \\
=&~ \cos(\omega_0 t)\int_0^t\cos(\omega_0\tau)f_T(\tau)d\tau +\sin(\omega_0t)\int_0^t\sin(\omega_0 \tau)f_T(\tau)d\tau.
\end{align}
The above demonstrates cancellation of potentially discontinuous terms, and we observe proper continuity of $\dot x$. When $f$ has jump discontinuities, we can interpret the above solution in the context of distributions. In that case, when $f \in L^2(0,T)$ the formula above \eqref{formula} represents a so called {\em mild} or {\em generalized} solution \cite{pazy} which satisfies a time-integrated version of the system \eqref{oscillator}. 

The solution in  \eqref{formula} (with initial conditions), demonstrates the regularity that $x$ and $\dot x$ are continuous functions, and $\ddot x$ is continuous on intervals of continuity of $f$ (in particular, it is right continuous). Namely: $x(\cdot) \in C^1([0,T])\cap C^2_r([0,T))$, the derivatives at $t=0$ are of course only taken from the right. With $f \in L^2(0,T)$, the expressions in \eqref{inhomgeident} and \eqref{gron} hold for the solution given by \eqref{formula}. Moroever, with $f_T$ periodic of period $T$, the solution $x$ (as well as $\dot x$ and $\ddot x$) can be periodically extended to $\mathbb R_+ \equiv (0,\infty)$, maintaining the same properties.

\begin{remark}
The particular solution above (corresponding to $x_0=x_1=0$) can also be obtained through the proper Duhamel procedure, yielding the convolution  representation
$$x(t)=\dfrac{1}{\omega_0}\int_0^tf(\tau)\sin\left(\omega_0(t-\tau)\right)d\tau;~~\dot x(t)=\int_0^tf(\tau)\cos\left(\omega_0(t-\tau)\right)d\tau.$$
In this framework, we also have the acceleration, which is  in $L^2(0,T)$ when $f \in L^2(0,T)$:
$$\ddot{x}(t) = f(t)-\omega_0\int_0^tf(\tau)\sin(\omega_0(t-\tau))d\tau.$$
\end{remark}

\subsection{Statement of the Main Result}
We recall the mass-spring system, with zero Cauchy data: 
\begin{equation}\label{sys1*}\begin{cases} 
      \ddot x(t) + \omega_0^2 x(t) = f(t) \\
      x(0) = 0; ~~\dot x(0) = 0.
   \end{cases}
\end{equation}
We recall the {\em standing notation} that $\omega_0^2 = k/m \in \mathbb{R}_+$ and we denote the natural (unforced) period of the system by $T_1 = \frac{2\pi}{\omega_0}>0$. We take, as {\em standing assumptions}, that the forcing $f:[0,\infty)\rightarrow\mathbb{R}$ is a piecewise continuous and bounded function, which is periodic with period $T_2>0$. In this case, we identify $x(\cdot),~ \dot x(\cdot)$ with their periodic extensions from $[0,T]$ to $[0,\infty)$.
We then have the main theorem:
\begin{theorem}[Main Theorem] \label{th:main}
Consider  Cauchy problem \eqref{sys1*}, with $f=f_{T_2}$ as described above, and denote  $x \in C^1([0,\infty))$ given by \eqref{varpar} as its unique solution. Let $\alpha = \dfrac{T_2}{T_1}\in \mathbb R_+$, and if $\alpha \in \mathbb{Q}_+$, then we take $\alpha =m/n$ with $m,n \in \mathbb N$ in lowest terms. In that case, denote $T_3 = nT_2 = m T_1$. 

\begin{itemize}
  \item[1.] If $ \alpha \notin \mathbb{Q}_+$, then $x(\cdot) \in L^{\infty}(\mathbb R_+)$ and not periodic for any period $T \in \mathbb R_+$.
  \item[2.] If $\alpha =\frac{m}{n} \in \mathbb{Q}_+ \setminus \mathbb{N},$ then $x(\cdot)\in L^{\infty}(\mathbb R_+)$ and periodic with  $T_3. $
  \item[3.] If $\alpha = \frac{m}{n} \in \mathbb{N},$ and $$\Big(f(t),\cos(\omega_0 t)\Big)_{L^2(0,T_3)} = \Big(f(t),\sin(\omega_0 t)\Big)_{L^2(0,T_3)} = 0,$$ then $x(\cdot) \in L^{\infty}(\mathbb R_+)$ is periodic with period  $T_3$.
  \item[4.] If $\alpha = \frac{m}{n} \in \mathbb{N},$ and  $$\Big(f(t),\cos(\omega_0 t)\Big)_{L^2(0,T_3)}\neq 0~\text{ or }~\Big(f(t),\sin(\omega_0 t)\Big)_{L^2(0,T_3)}\neq 0,$$ then $x(\cdot)$ is resonant. In particular, it is not periodic of any period $T\in\mathbb R_+$ and $x \notin L^{\infty}(\mathbb R_+)$.
\end{itemize} 
\end{theorem}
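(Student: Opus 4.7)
My plan is to repackage the variation-of-parameters representations \eqref{varpar} for $x$ and $\dot x$ into a single complex variable $z(t) := \omega_0 x(t) + i\dot x(t)$. A direct trigonometric calculation yields
$$z(t) = i\, e^{-i\omega_0 t}\, F(t), \qquad F(t) := \int_0^t e^{i\omega_0 \tau}\, f(\tau)\, d\tau.$$
Since $|z(t)|^2 = \omega_0^2 x(t)^2 + \dot x(t)^2 = |F(t)|^2$, the $L^\infty$-boundedness of $x$ is equivalent to that of $F$, and $T^\star$-periodicity of $x$ translates into the twisted identity $F(t+T^\star) = e^{i\omega_0 T^\star}\, F(t)$. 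In this way, the entire theorem reduces to controlling the single complex-valued indefinite integral $F$.

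The central computation is a one-step functional equation for $F$. Splitting $\int_0^{t+T_2}$ at $T_2$, shifting the tail by $T_2$, and using $T_2$-periodicity of $f$ together with $\omega_0 T_2 = 2\pi\alpha$ gives
$$F(t+T_2) = A_1 + e^{2\pi i\alpha}\, F(t), \qquad A_1 := F(T_2) = \int_0^{T_2} e^{i\omega_0\tau} f(\tau)\, d\tau,$$
which iterates to $F(t+nT_2) = A_1\, \Sigma_n + e^{2\pi in\alpha}\, F(t)$, with $\Sigma_n := \sum_{k=0}^{n-1} e^{2\pi ik\alpha}$. Crucially, the real and imaginary parts of $A_1$ are precisely the $L^2(0,T_3)$ inner products appearing in the hypotheses of Cases 3 and 4 (where $T_3=T_2$), so those hypotheses read simply as $A_1 = 0$ versus $A_1 \neq 0$. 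The case analysis is now elementary. For $\alpha \notin \mathbb N$, one has $e^{2\pi i\alpha} \neq 1$ and $|\Sigma_n| \le 2/|e^{2\pi i\alpha}-1|$ uniformly in $n$, which combined with the trivial bound on $F$ over partial-period remainders yields $x \in L^\infty(\mathbb R_+)$ in Cases 1--3. In Case 2, $\Sigma_n = 0$ at $n$ equal to the denominator of $\alpha$ in lowest terms and $e^{2\pi in\alpha}=1$, collapsing the recurrence to $F(t+T_3)=F(t)$ and thus $z(t+T_3)=z(t)$. In Case 3, the hypothesis $A_1=0$ similarly reduces the recurrence to $F(t+T_3)=F(t)$ and the same $T_3$-periodicity conclusion follows. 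In Case 4, $\Sigma_n = n$ and $|F(nT_2)| = n|A_1| \to \infty$, so $x$ is unbounded and therefore not periodic of any period.

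The main obstacle is the non-periodicity assertion in Case 1. My strategy is by contradiction: if $x$ were $T^\star$-periodic, then $f = \ddot x + \omega_0^2 x$ is also $T^\star$-periodic, and the minimality of $T_2$ as the period of $f$ forces $T^\star = nT_2$ for some $n \in \mathbb N$. The twisted condition $F(t+nT_2) = e^{2\pi in\alpha}F(t)$ combined with the iterated recurrence then requires $A_1 \Sigma_n = 0$; irrationality of $\alpha$ makes $e^{2\pi i\alpha}$ a non-root-of-unity, so $\Sigma_n = (e^{2\pi in\alpha}-1)/(e^{2\pi i\alpha}-1) \neq 0$ for every $n \ge 1$, and we are forced into $A_1 = 0$. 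The delicate closing step is to reconcile $A_1 = 0$ with the remaining structural hypotheses on $f$: the algebraic constraint $\int_0^{T_2} e^{i\omega_0\tau}f(\tau)\,d\tau = 0$ must be tied back to the irrational ratio $\alpha$ and the minimality of $T_2$ to produce an impossibility. This is most naturally attempted through a Fourier expansion of $f$ on $[0,T_2]$, noting that when $\alpha \notin \mathbb Q_+$ the frequency $\omega_0$ does \emph{not} appear as any Fourier mode of $f$, so the $\omega_0$-content of $x$ arising from the homogeneous part cannot be absorbed by the periodic steady state. I expect this final reconciliation to be where the proof demands the most care.
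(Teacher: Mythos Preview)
Your complex packaging $z(t)=\omega_0 x(t)+i\dot x(t)=ie^{-i\omega_0 t}F(t)$ is a genuinely different route from the paper's and streamlines Cases 2--4 considerably. The paper works throughout with the two real integrals $\int_0^t f\cos(\omega_0\tau)\,d\tau$ and $\int_0^t f\sin(\omega_0\tau)\,d\tau$ separately, proving boundedness in Case~1 via explicit Dirichlet-kernel estimates (their Lemmas~5.4--5.5) and periodicity in Cases~2--3 by directly verifying that both integrals over $[0,T_3]$ vanish; your single recurrence $F(t+T_2)=A_1+e^{2\pi i\alpha}F(t)$ absorbs all of this at once. One omission: the paper also argues \emph{minimality} of the period $T_3$ in Cases~2 and~3, which your outline does not address.

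The real problem is precisely where you flag it, and it is not a matter of care---the step cannot be closed, because the non-periodicity assertion in Case~1 is false as stated. Your argument correctly forces $A_1=0$, but then your own recurrence shows $G(t):=e^{-i\omega_0 t}F(t)$ is exactly $T_2$-periodic, hence so is $z=iG$, hence so is $x$: there is nothing to contradict. Concretely, take $\omega_0=1$, $T_2=1$ (so $\alpha=1/(2\pi)\notin\mathbb Q$), and $f(t)=\tfrac12+(2\pi^2-\tfrac12)\cos(2\pi t)$; this $f$ is bounded with minimal period $1$, yet the zero-data solution is $x(t)=\tfrac12(1-\cos 2\pi t)$, visibly $1$-periodic. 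The paper's own proof stumbles at the same place: in deriving what becomes (5.33), it identifies $\int_0^{t-nT_2}f(\tau)\sin(\omega_0(t-\tau))\,d\tau$ with $\omega_0 x(t-nT_2)$, which silently uses $\omega_0 nT_2\in 2\pi\mathbb Z$---exactly what fails when $\alpha$ is irrational. The boundedness half of Case~1 is fine in both your version and the paper's; the non-periodicity half needs an additional hypothesis such as $A_1\neq 0$.
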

The proof of Theorem \ref{th:main} constitutes Section \ref{mainproof}. 

\subsection{Discussion of Main Result and Context}
Having recalled the basic properties of the oscillator with sinusoidal forcing, as well as resonance, we can   discuss the result in Theorem \ref{th:main} in that context. First, we define {\em resonance}, corresponding to solutions to \eqref{oscillator}. 
\begin{definition} Given a forcing function $f=f_T$ which is both periodic (with minimal period $T>0$) and bounded (in time), the resulting solution $x(\cdot)$ to \eqref{oscillator} is {\em resonant} if it is unbounded. \end{definition}
\noindent In this sense, resonance constitutes an unbounded response  to a bounded input. 

The variation of parameters formula, given above in \eqref{varpar}, involves a term of form $\ds \int_0^tf_T(\tau)\cos(\omega_0 \tau) d\tau$, (of course also a term with sine). This term can be evaluated for $t=T$ (on periods of $f$) or periods of $\frac{2\pi}{\omega_0}$. 
In this scenario, we can obtain a simple illustration of resonant behavior. Let us assume that we have a function $f$ and a value $T_3>0$ so that:
\begin{itemize}
\item $f_T$ and $\cos(\omega_0t)$ have a coincident period $T_3$ (though $T_3$ need not be equal to $\frac{2\pi}{\omega_0}$, for instance if $T_3$ is an integer multiplier thereof), and assume (for simplicity) that $(f,\sin(\omega_0 t))_{L^2(0,T_3)}=0$;
\item $\ds \int_0^{T_3}f(\tau)\cos(\omega_0\tau)d\tau =Q \neq 0$.
\end{itemize} 
Then, for $t=nT_3+r$ with $n\in \mathbb N$ maximal and $0 \le r < T$, we have: 
\begin{align*}\int_0^tf(\tau)\cos(\omega_0\tau)d\tau = & ~ \int_0^{nT_3}f(\tau)\cos(\omega_0\tau)d\tau + \int_0^rf(\tau)\cos(\omega_0\tau)d\tau\\
=&~ n \int_0^{T_3}f(\tau)\cos(\omega_0\tau)d\tau+\int_0^rf(\tau)\cos(\omega_0\tau)d\tau \\
=&~nQ+\int_0^rf(\tau)\cos(\omega_0\tau)d\tau.
\end{align*}
From here, we observe several items:
\begin{itemize}
\item Resonance is permitted by the existence of a coincident period for $f$ and $\cos(\omega_0t)$.
\item The growth per cycle in the resonant case (which we denoted $Q$, related to the previously defined amplification factor $\mathbf A$) is obtained by an $L^2(0,T_3)$ inner product of the form $Q=\ds(f,\cos(\omega_0 t))_{L^2(0,T_3)}$, and its non-zero value.
\item Immediately, we see  the fundamental set $\{\cos(\omega_0t),\sin(\omega_0t)\}$ playing a special role in this general setting, in particular, through the projection of $f$ onto the span of this set in the sense of $L^2(0,T_3)$.
\end{itemize}

In our definition of {\em resonance},  we do not include any mention of a quasi-period or lack of a period. Though in  resonant cases, we apparently have a quasi-period, which can be observed, along with an explicit growth envelope, which can be calculated. It is interesting to note that, in the classical case where $f$ is a sinusoid, denoted $f(t)=\text{trig}(\omega t)$, the orthogonality of sinusoids is in force. This forces a ``0 or not" dichotomy for the aforementioned inner product against $\text{trig}(\omega_0 t)$, which {\em may not be the case with a general $f_T$}. Indeed, a general $f_T$ may aggregate mass or produce cancellation with $\text{trig}(\omega_0t)$ on some interval $[0,T_3]$. At the heart of our considerations is whether $f_T$ ``picks out" some non-zero mass from the underlying fundamental set on the period $T_3$. As one might suspect, the symmetries of the graph of $f_T$ are fundamentally at issue here---both in terms of output values relative to the semi-period $t=T/2$ as well as output values relative to half of the maximum amplitude. Such considerations will be exemplified in our examples and are accounted for in Theorem \ref{th:main} above.

\section{Motivational Examples with Non-sinusoidal Forcing}
In this section we discuss forcing functions of interest, and provide some examples of solutions with non-sinusoidal forcing functions $f_T$. 
\subsection{Forcing Functions of Interest}
We now describe some interesting forcing functions $f_T$ which are viable in the context of Theorem \ref{th:main}. Some of these functions will be considered in simulated solutions to \eqref{oscillator}. We have a particular interest in non-smooth functions, noting that switching functions arise naturally in engineering and mechanics applications. We will include a brief description of why these functions are mathematically interesting, from the point of view of periodicity and resonance. For each example, we plot  the function  on a representative interval $[0,1]$, taken to be one full period $T=1$ of the graph. 
\begin{itemize}
\item So-called {\em step functions}, which may or may not be symmetric about a semi-period ($T/2$). \\ We note that the output values of $f_T$ are not relevant, since we can translate vertically by a constant factor, yielding (by superposition) only a fixed translation of the solution. 
\begin{center}
\includegraphics[scale=.37]{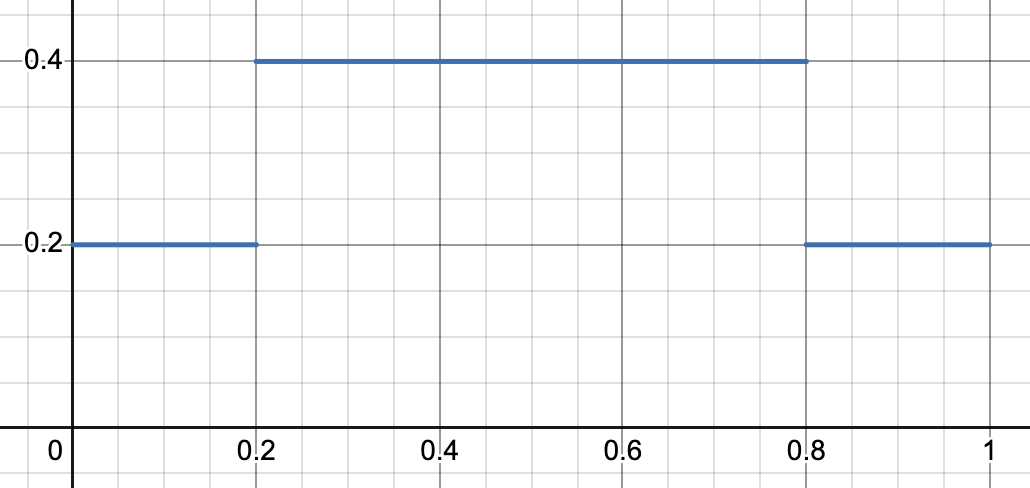} \hskip.5cm \includegraphics[scale=.35]{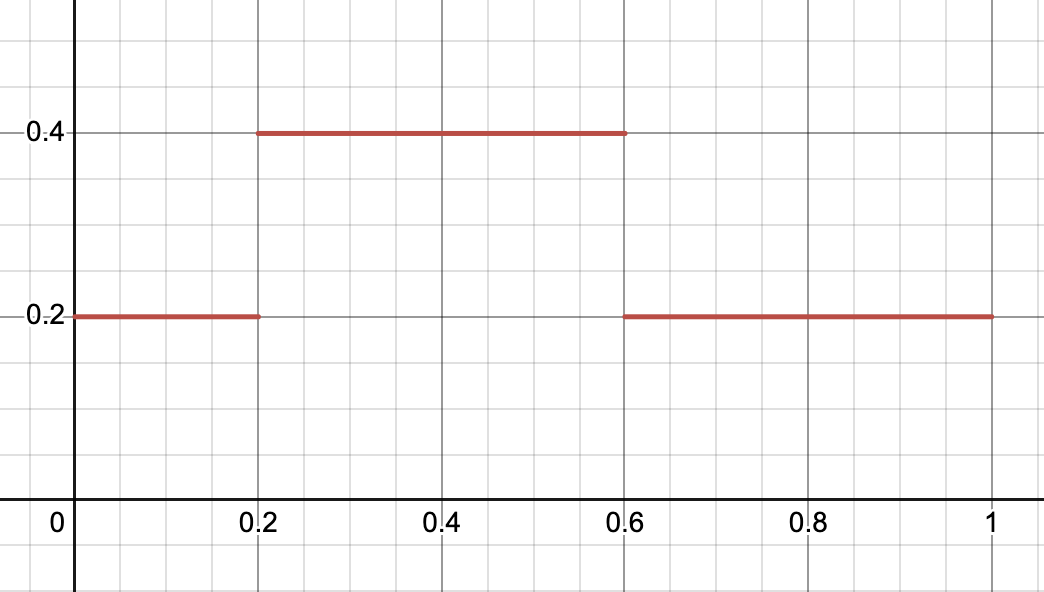}
\hskip.3cm
\end{center}
\begin{quote} \small The graph (L) is symmetric about the semi-period. The graph (R) is periodic on $[0,T]$, but not symmetric about the semi-period. \end{quote}

\item {\em Rectified trigonometric functions}, which compose absolute value (L) or max (R) with  $\text{trig}(\omega t)$:
\begin{center}
\includegraphics[scale=.31]{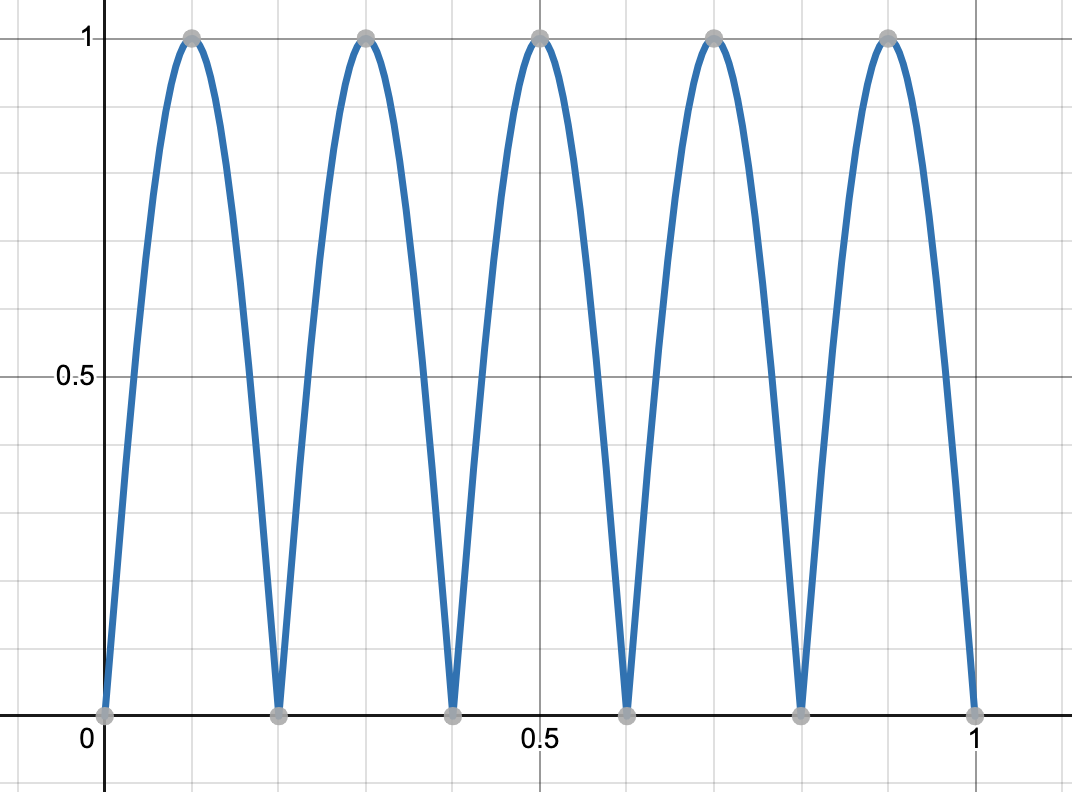} \hskip.5cm \includegraphics[scale=.38]{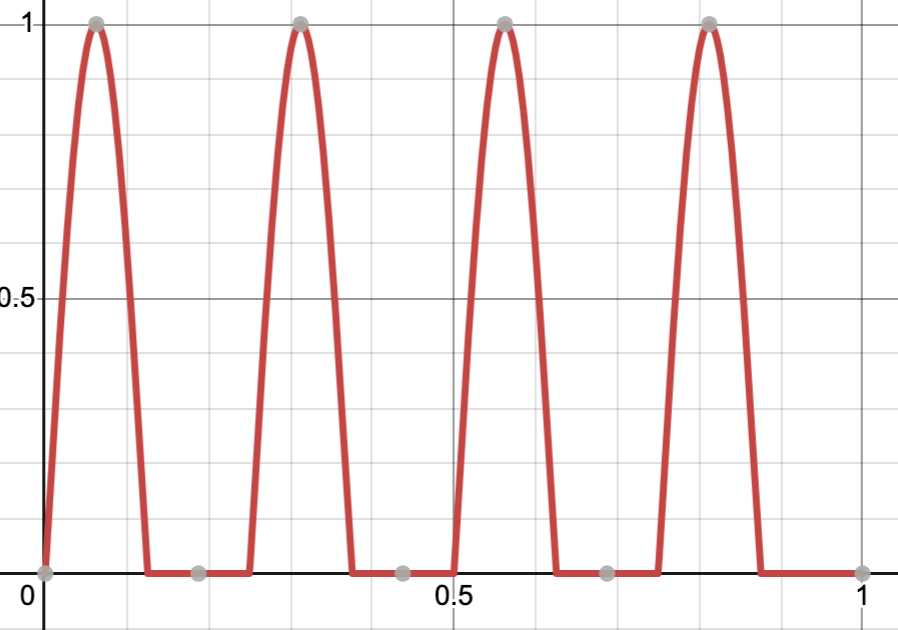}
\end{center}
Such functions are not symmetric in their output values (relative to, say, their average value), and present no cancellation on any fractional period, regardless of translation. These functions are continuous, but not continuously differentiable. 

\item {\em Piecewise linear functions} (continuous, but not differentiable):
\begin{center}
\includegraphics[scale=.55]{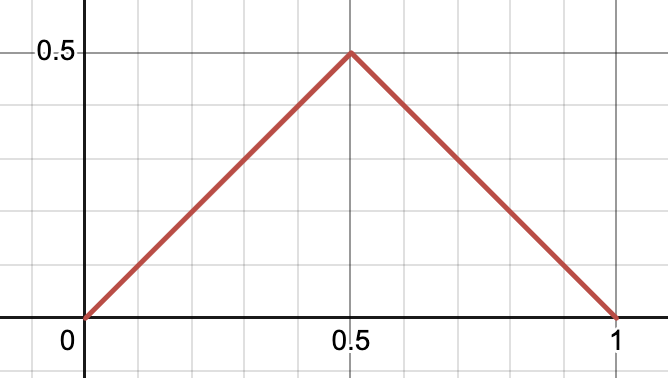}
\hskip.5cm
\includegraphics[scale=.3]{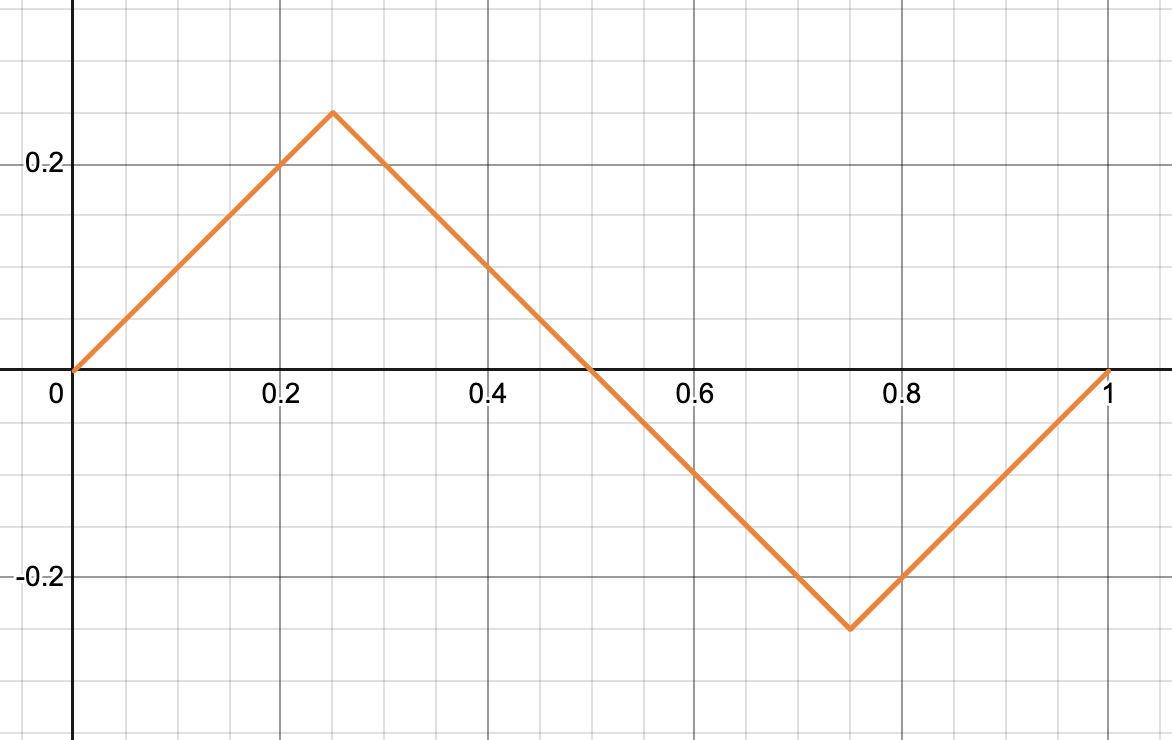}
\end{center}
\begin{quote} \small In the above examples, the forcing function  represents a  {\em sawtooth} function. The (L) is symmetric about the semi-period and positive.  The graph (R) a is a piecewise linear function, producing a sawtooth which mirrors a sinusoid, to some extent. This produces a similar ``cancellation" on its period, resulting in symmetry about the output value $0$, and anti-symmetry about the semi-period. \end{quote}

\end{itemize}
\subsection{Examples of Responses}
In this section we look at ODE solutions to \eqref{sys1*}---the oscillator with periodic forcing and zero Cauchy data. We will utilize the functions in the previous section which are piecewise continuous and bounded, but not necessarily smooth, and not sinusoidal. 
We look for several examples (of  forcing and solution) which illustrate the cases of Theorem \ref{th:main}. We are interested in examples which demonstrate the existence of a:
\begin{itemize}
\item  non-sinusoidal forcing function that produces resonance with period matching ($T_1=T_2$);
\item  non-sinusoidal forcing function that produces resonance without period matching, but a particular multiple occurs, such as $k T_1=T_2$ for $k \in \mathbb N$;
\item  non-sinusoidal forcing yielding a periodic (non-resonant) solution with period matching;
\item non-sinusoidal forcing yielding a bounded but non-periodic solution.
\end{itemize}

These examples highlight the main outcomes related to bullet points (3.) and (4.) in Theorem \ref{th:main}, and these may be somewhat unexpected scenarios; from Section \ref{classical} for sinusoidal forcing, resonance occurs if and only if period matching is in place. For our result on general forcing, (a) {\em resonance can occur if the forced period is an integer multiple of the natural period}, and (b) {\em period-matching is not sufficient to ensure resonance}. 

\subsubsection{Examples: Periodic Solutions and Resonance with Sawtooth Forcing}
We consider a sawtooth forcing  that mirrors a sinusoid, as in the previous section. Denote $f_{st}(t)$ as the periodic extension to $ t\in [0,\infty)$ of the graph below.
\begin{center}
\includegraphics[scale=.32]{saw3.png}
\end{center}
The forcing function is graphed with a period of $T_2=1$, but we will consider several different periods. For now, recall that our ODE is:
$$\ddot x + \omega_0^2x = f_{st},~~x(0)=\dot x(0)=0.$$
Below, we will construct several examples of periodicity and resonance using sawtooth functions; these examples are interesting in  that they are not sinusoids, but  mirror them in their qualitative structure.

First, let us give $f_{st}(t)$ sawtooth forcing with a $T_2=4$, and we consider $T_1=4$ (with $\omega_0=\pi/2$). In this instance, we have {\em period matching}.  While the forcing is not a sinsuoid, it is clear that there is nontrivial ``agreement" between ~$f_{st}$ here and $f(t)=\sin(\frac{\pi}{2} t)$---the latter of which causes traditional resonance as a forcing. The solution with $f=f_{st}$ is shown below.
\begin{center} \includegraphics[scale=0.45]{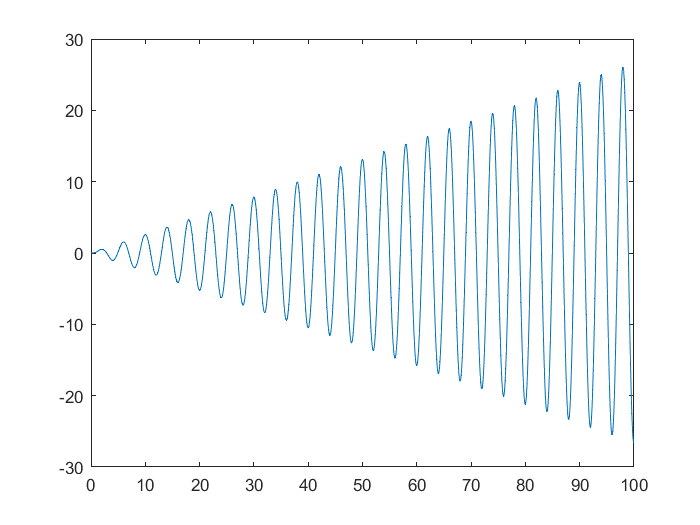} \end{center}
As the graph indicates, resonance occurs; this validates case (4.) of Theorem \ref{th:main}, as period matching is in place and 
$$\int_0^4f_{st}(t)\sin([\pi/2]t)dt = \dfrac{16}{\pi^2} \neq 0.$$

We can subsequently consider a scenario where period matching is not in force. Consider  $f_{st}(t)$ with $T_2=2\pi$, and let us consider the ODE with $T_1=4\pi$ (so $\omega_0=1/2$). In this case $T_2/T_1=1/2 \in \mathbb Q_+\setminus \mathbb N$. We expect boundedness of the solution, with period $T_3=2\pi$ from case (2.) of Theorem \ref{th:main}. The solution is shown below.
\begin{center} \includegraphics[scale=0.45]{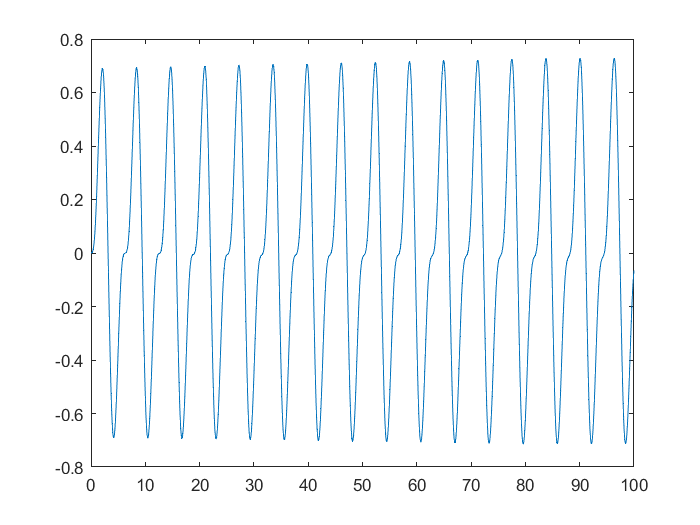} \end{center}

We can finally try $f_{st}$ with a period of $T_2 = 6$, with $T_1=2$. This presents an integer multiple of the natural period, and we do observe resonance. 
\begin{center} \includegraphics[scale=0.35]{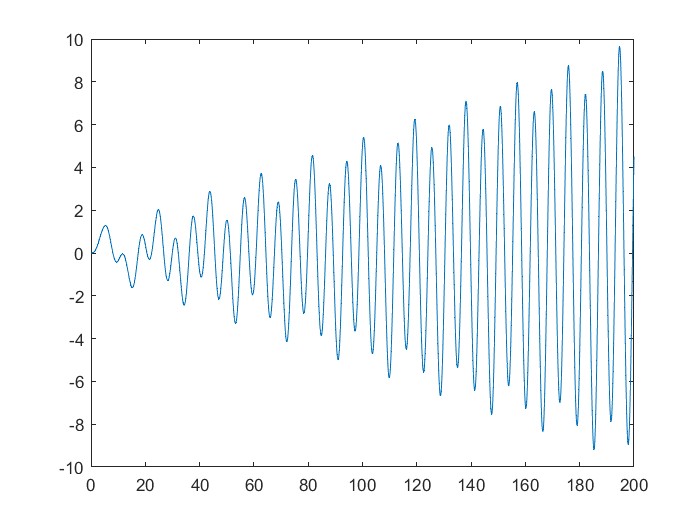} \end{center}
This scenario is interesting, because if we were to utilize a sinusoidal forcing which ``matches" this sawtooth function, namely ~$f(t)=\sin([\pi/3]t)$, we would not observe resonance, since $T_1=2 \neq T_2=6$. 
Checking this condition against point (4.) in Theorem \ref{th:main}, we note that
$$\int_0^6f_{st}(t)\sin(\pi t)dt \approx -.27019 \neq 0,$$ yielding resonance, while
$$\int_0^6 \sin([\pi/3]t)\sin(\pi t) dt = 0,$$
by classical orthogonality of sinusoids.

\subsubsection{Example: Resonance with A Step Function and Period Matching}
We now demonstrate resonance with a properly discontinuous forcing. Consider a piecewise constant step function which is symmetric about its half period and takes on both step values on sets of equal measure.
We consider $T_2=2$ in this instance, and we consider the periodic extension to $[0,\infty)$ of 
\[f_{s}(t) =  \begin{cases} 
      1 & t \in [0,T_2/4],\\
      0 & t \in [T_2/4,3T_2/4],\\
      1 & t \in [3T_2/4,T_2].
   \end{cases}
\]
We consider the oscillator, taking $\omega_0=\pi$ with $f=f_s$ above, so $T_1=T_2=2$, and we observe:
\begin{center} \includegraphics[scale=0.4]{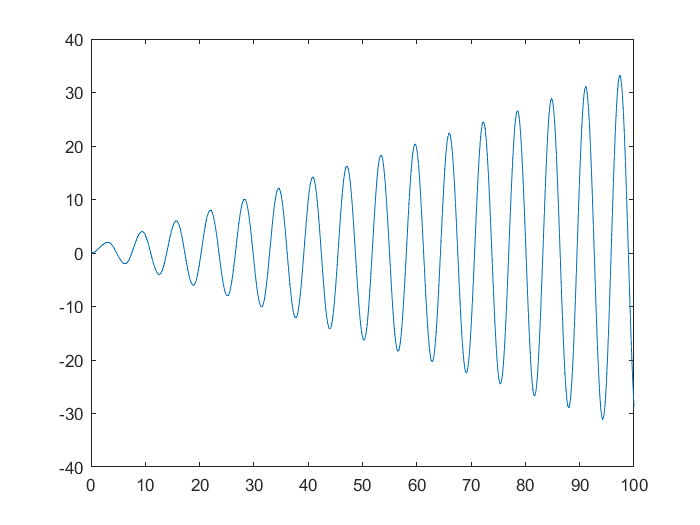} \end{center}
Again, resonance is observed in this scenario of period matching. 

\subsubsection{Example: Resonance with Positive Function and No Period Matching} 
We now consider a  non-negative function that can produce resonance. We consider the oscillator with $\omega_0=\pi$ and we consider a forcing of the form  $f_{a}(t) = \max\left\{\sin(\frac{\pi}{2} x),0\right\}.$ In this case, $T_2=4,$ while $T_1=2$, so $\alpha = T_2/T_1=2 \in \mathbb N$ (in case (3.) or (4.) of Theorem \ref{th:main}). We  observe resonance.
\begin{center} \includegraphics[scale=0.4]{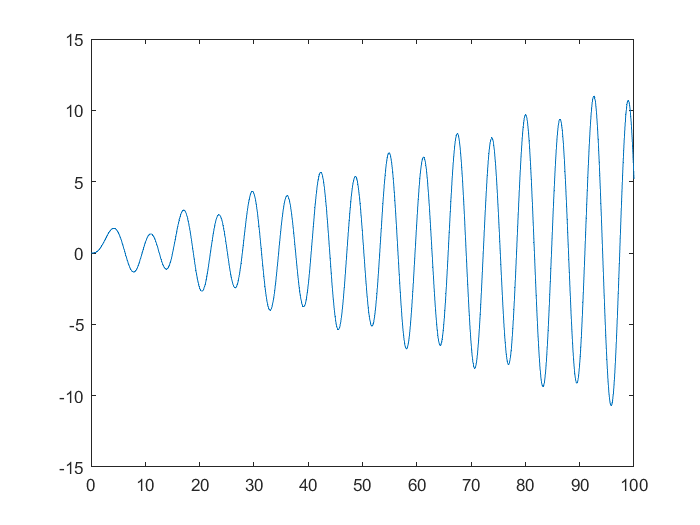} \end{center}
A posteriori, based on the dichotomy for (3.) and (4.) in Theorem \ref{th:main}, we infer that
$$\Big(f_a(t),\cos(\pi t)\Big)_{L^2(0,4)}\neq 0~\text{ or }~\Big(f_a(t),\sin(\pi t)\Big)_{L^2(0,4)}\neq 0,$$

\subsubsection{Example: Period Matching with No Resonance}
%Example: [Sawtooth with positive integer multiple and no resonance---is this possible?]
We now consider a situation where we have {\em explicit period matching} ($T_1=T_2$), but no resonance. In this case we are tacitly enforcing the condition in bullet point (3.) of Theorem \ref{th:main}, namely, a forcing that has integer period matching (in fact, exact period matching) but no proper first (Fourier) mode contribution. So:
$$\Big(f(t),\cos(\omega_0 t)\Big)_{L^2(0,T)} = \Big(f(t),\sin(\omega_0 t)\Big)_{L^2(0,T)} = 0.$$
This is to say: cancellations which occur between $f$ and the fundamental set, on each period, ensure that there is zero growth factor $Q$ per period. So, consider the step function below:
\begin{center} \includegraphics[scale=0.42]{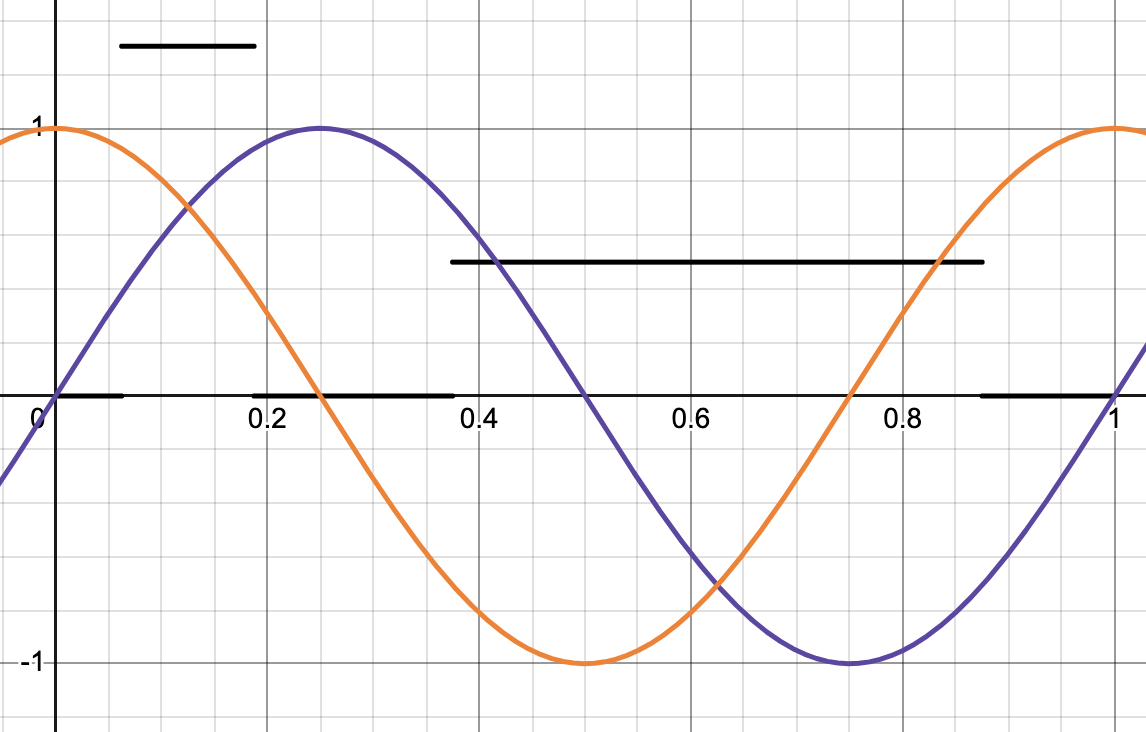} \end{center}
This function, $f_m(t)$ graphed in black, is defined on $t \in [0,1]$, then extended periodically to $t \in [0,\infty)$. It assume two positive values, say $A<B$, which have a particular relationship, described below, inducing cancellation. We include the graphs of ~$\sin(2\pi t)$ and ~$\cos(2\pi t)$ for reference. Then, denoting $T=1$:
\[f_{m}(t) =  \begin{cases} 
      B & t \in [1/16,3/16],\\
      A & t \in [3/8,7/8],\\
      0 & t \in [0,1/16] \cup [3/16, 3/8] \cup [7/8,1].
   \end{cases}
\]
If we let, for instance, $A=1/2$, then the value of $B$ can be chosen as
$$B=-\dfrac{\int_{3/8}^{7/8}\cos\left(2\pi\cdot x\right)dx}{2\int_{1/16}^{3/16}\cos\left(2\pi\cdot x\right)dx}.$$
With these choices, it is readily verified that 
$$\Big(f_m(t),\cos(2\pi t)\Big)_{L^2(0,1)} = \Big(f_m(t),\sin(2\pi t)\Big)_{L^2(0,1)} = 0,$$
and hence we have period matching for this forcing, but resonance will not occur and we will obtain a periodic solution of period $T_3=1=T_1=T_2$.

\subsubsection{Example: Irrational Relationship Between $T_1$ and $T_2$}
When the relation $\alpha = T_2/T_1 \in \mathbb Q_+\setminus \mathbb N$, the solution mirrors exactly the case of classical sinusoidal forcing, described above; namely, if there is an irrational relationship between periods (the periods are not commensurate), we obtain a bounded solution that is not periodic of any period. Consider  the forcing is $f_i(t) = \sin(\pi t)$, which has period $T_2 = 2$. We then consider $\omega_0=1$ in the oscillator ODE, so that $T_1 = 2\pi$. The corresponding solution forced by $f_i(t)$ is below.
\begin{center} \includegraphics[scale=0.35]{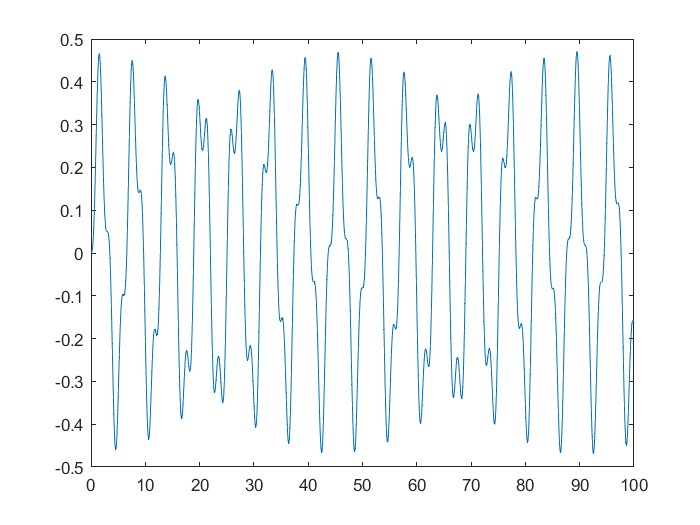} \end{center}

We can also consider, for a non-sinusoidal example, the rectified sine function $f_{j}(t) = |\sin(2\pi t)|$, which has period $T_2=1/2$. 
Again, considering the natural period of $T_1=2\pi$ (when $\omega_0=1$), we observe the solution in blue (plotted against the forcing function in green). 
\begin{center} \includegraphics[scale=0.28]{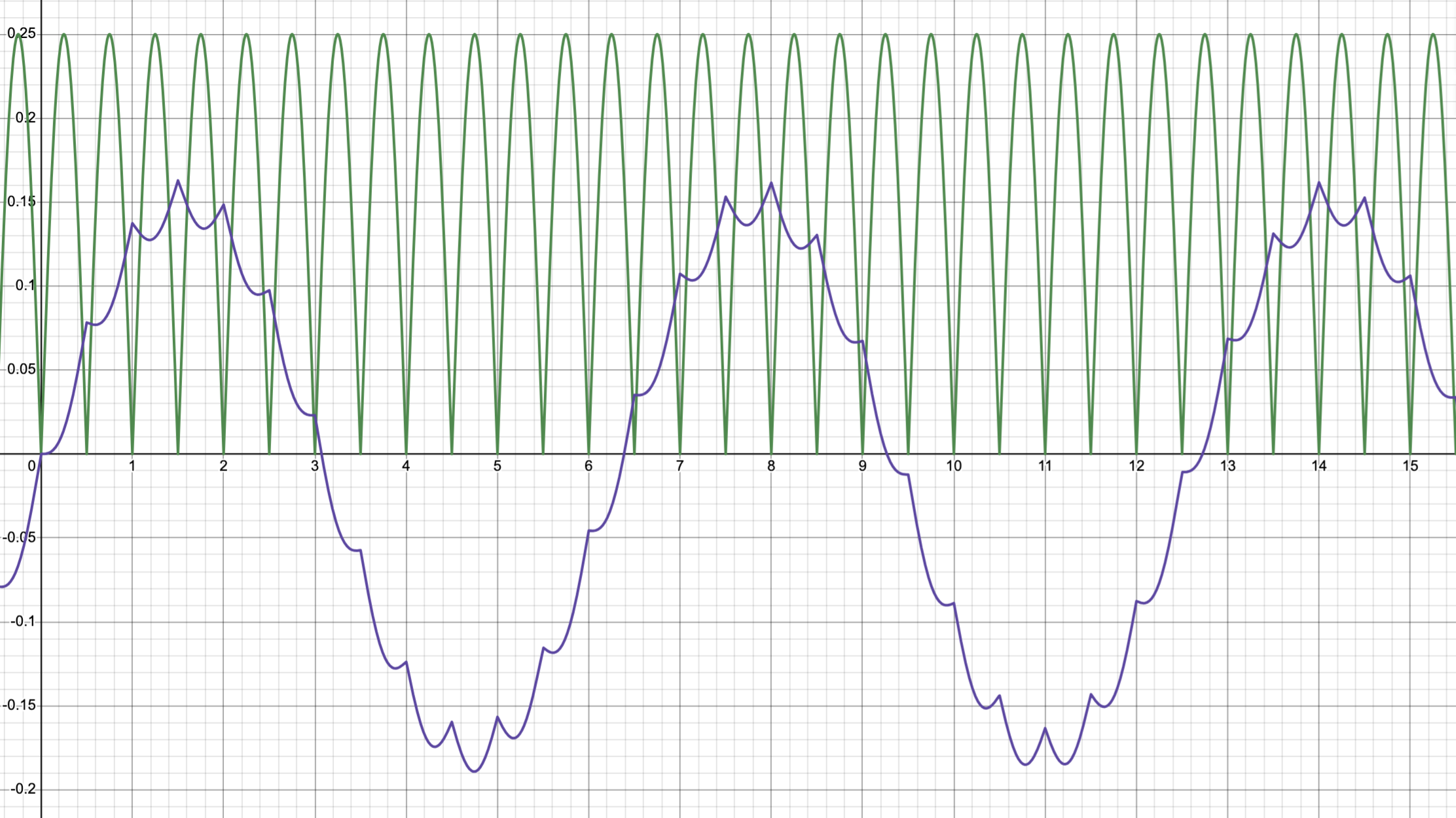} \end{center}
Again, we note that this function is bounded, but  not periodic of any period $T \in \mathbb R_+$ (one can see this by focusing on local max and mins of the carrier sinusoid).

\section{Proof of Main Theorem}\label{mainproof}
We retain our standing notation, and recall it briefly for the system
\begin{equation}\label{sys1**}\begin{cases} 
      \ddot x(t) + \omega_0^2 x(t) = f(t), \\
      x(0) = 0; ~~\dot x(0) = 0.
   \end{cases}
\end{equation}
The natural frequency is $\omega_0 \in \mathbb{R_+}$ and the associated natural period of the system is $T_1 = \frac{2\pi}{\omega_0}>0$. We assume the forcing $f:[0,\infty)\rightarrow\mathbb{R}$ is a piecewise continuous and bounded function, which is periodic with (minimal) period $T_2$. The quantity $\alpha = T_2/T_1$; when $\alpha = m/n \in \mathbb Q_+$ we assume it is in lowest terms, and we denote by $T_3=nT_2=mT_1$.

We proceed with some elementary facts and then proceed to proving the main theorem. 

\subsection{Useful Facts, and Identities}
We first recall Theorem \ref{sumprod}, which states that for continuous periodic functions, sums (and products) thereof are periodic if and only if the periods of the constituent functions are commensurate (rational multiples of one another). 
We can now turn to the calculus of periodic functions. 
\begin{lemma} Let $f: [0,\infty)\to \mathbb R$ be a continuous periodic function of (minimal) period $T>0$. Then for any derivative of $f$ which exists, said derivative will be periodic with period $T$. In particular, if $f \in C^1([0,\infty); \mathbb R)$ and is periodic of period $T$, then $f'(\cdot)$ is periodic with period at most $T$. \end{lemma}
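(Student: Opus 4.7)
My plan is to work directly from the definition of the derivative, using the $T$-periodicity of $f$ itself. The key observation is that translating the argument of a difference quotient by $T$ leaves it unchanged, because $f$ is $T$-periodic.

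Concretely, fix any $t \ge 0$ at which $f'(t)$ exists. I will compute
\[
f'(t+T) = \lim_{h \to 0}\frac{f(t+T+h)-f(t+T)}{h} = \lim_{h \to 0}\frac{f(t+h)-f(t)}{h} = f'(t),
\]
where the middle equality uses $f(t+T+h)=f(t+h)$ and $f(t+T)=f(t)$ from the periodicity hypothesis, and the existence of the final limit (and hence of the one on the left) follows from the assumption that $f'(t)$ exists. This immediately shows that wherever $f'$ is defined, $T$ is a period of $f'$. The second bullet of the statement (the $C^1$ case) is then just a restatement: if $f \in C^1$, then $f'$ exists everywhere on $[0,\infty)$ and the calculation above applies at every point, so $f'$ is $T$-periodic. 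I intentionally state the conclusion as ``period at most $T$'' rather than ``minimal period $T$'' since differentiation could in principle collapse the minimal period; however, one can separately argue, using boundedness of the continuous $T$-periodic function $f$ on $[0,\infty)$, that in fact the minimal period of $f'$ equals $T$ (if $f'$ had a smaller period $S<T$, integrating $f'(t+S)=f'(t)$ would force $f(t+S)-f(t)$ to be a constant $c$, and iterating $f(t+nS)=f(t)+nc$ forces $c=0$, contradicting minimality of $T$).

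The only obstacle worth flagging is a mild domain issue: since $f$ is defined on $[0,\infty)$ rather than on $\mathbb{R}$, the difference quotient defining $f'(t+T)$ uses only values $f(t+T+h)$ with $t+T+h\ge 0$, which is automatic for $t\ge 0$ and small $h$, so no trouble arises. Thus the proof is essentially a one-line substitution inside the limit, and no delicate analytic tools are required.
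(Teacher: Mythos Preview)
Your proof is correct and takes essentially the same approach as the paper: the paper simply writes $f(t)=f(t+T)$ and differentiates both sides, while you unpack this by writing out the difference quotient explicitly and passing to the limit. Your parenthetical argument that the minimal period of $f'$ must in fact equal $T$ (via integration and boundedness) goes a bit beyond what the paper states or proves, but it is correct.
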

\begin{proof}
For all $t \in [0,T)$ we have $f(t)=f(t+T)$. Differentiating both sides yields, for all $t \in [0,T)$, that $f'(t)=f'(t+T)$. 
\end{proof}
For antiderivatives, it is natural to require zero mean. 
\begin{lemma} \label{difflemma} Suppose that
$g:[0,\infty) \to \mathbb R$ is an integrable function (for instance, $g$ is continuous). If $g$ is periodic with period $T$ and 
~$\int_0^Tg(t)dt =0,$ then any antiderivative $\int^tg(\tau)d\tau$ is also periodic, with period at most $T$. 
\end{lemma}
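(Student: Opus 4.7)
The plan is to reduce the claim to showing that the specific antiderivative $G(t) := \int_0^t g(\tau)\,d\tau$ is $T$-periodic. Any other antiderivative differs from $G$ by an additive constant, and adding a constant preserves periodicity, so this reduction is automatic. Thus the task becomes: verify $G(t+T)=G(t)$ for all $t \ge 0$.

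Next, I would compute the difference directly. By definition,
\begin{equation*}
G(t+T)-G(t) \;=\; \int_t^{t+T} g(\tau)\,d\tau,
\end{equation*}
so the goal reduces to showing that the integral of $g$ over any interval of length $T$ equals the integral over $[0,T]$, which is zero by hypothesis. For $t\in[0,T]$, I would split the integral at $\tau=T$, writing
\begin{equation*}
\int_t^{t+T} g(\tau)\,d\tau \;=\; \int_t^{T} g(\tau)\,d\tau + \int_T^{t+T} g(\tau)\,d\tau,
\end{equation*}
and then apply the substitution $s=\tau-T$ in the second integral, using $g(s+T)=g(s)$ from the periodicity hypothesis, to obtain $\int_0^t g(s)\,ds$. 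Recombining gives $\int_0^T g(\tau)\,d\tau=0$.

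For general $t \ge 0$, I would write $t = nT + s$ with $n\in\mathbb{N}_0$ and $s\in[0,T)$, and apply a change of variables $u=\tau-nT$ together with repeated use of periodicity to reduce to the case already handled. This establishes $G(t+T)=G(t)$ for all $t\ge 0$, so $G$ is periodic of period $T$; the qualifier ``at most $T$'' is included because the minimal period of $G$ may be a proper divisor of $T$ (for instance, if $g$ itself has additional symmetry that causes its antiderivative to close up early).

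There is no genuine obstacle here: the proof is a routine interval-splitting argument, and the only thing to be careful with is the bookkeeping when $t>T$, which is handled cleanly by the Euclidean decomposition $t=nT+s$ combined with iteration of $g(\cdot+T)=g(\cdot)$. The zero-mean hypothesis is clearly essential—without it, the identity would instead read $G(t+T)-G(t)=\int_0^T g(\tau)\,d\tau\neq 0$, and $G$ would grow linearly in $t$ on average rather than being periodic.
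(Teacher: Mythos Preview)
Your proof is correct and follows essentially the same approach as the paper: both reduce to the specific antiderivative $G(t)=\int_0^t g(\tau)\,d\tau$ and verify $G(t+T)=G(t)$ via periodicity of $g$ together with the zero-mean hypothesis. The only cosmetic difference is that the paper periodically extends $g$ to $[-T,0]$ and applies a single substitution $\tau\mapsto\tau-T$ to handle all $t\ge 0$ at once, whereas you split $\int_t^{t+T}$ at $\tau=T$ and then treat $t>T$ separately by Euclidean division; both executions are standard and equivalent.
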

\begin{proof} Let us extend $g$ periodically from $[-T,0]$ and denote the extension again by $g$. Since $g$ has zero average on $[0,T]$, and we have extended it periodically on $[-T,0]$, we know that $\int_{-T}^0 g(\tau)d\tau=0$. 

WLOG, we choose the antiderivative  $G(t)= \int_0^t g(\tau)d\tau$.
Write the following for $G(t)$:
$$G(t+T)=\int_0^{t+T}g(\tau)d\tau=\int_{-T}^{t}g(\tau)d\tau = \left(\int_{-T}^{0}+\int_{0}^t\right)g(\tau)d\tau=0+G(t).$$ This holds for all $t \in [0,T]$, so $G(\cdot)$ is periodic with period at most $T$.
\end{proof}

We now note that any period for a given periodic function must be a multiple of the minimal period. This provides some motivation for including minimality in the definition. 
\begin{lemma} \label{lemma3} Suppose that $g:\mathbb [0,\infty) \to \mathbb R$ is periodic with period $T_0$. Suppose for $T_1 \in \mathbb R_+$ that $g(t+T_1)=g(t)$ for all $t \in [0,\infty)$. Then there is an $n \in \mathbb{N}$ such that $T_1 = nT_0.$ 
\end{lemma}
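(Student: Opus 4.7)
The plan is to apply the Euclidean division algorithm to $T_1$ and $T_0$, combined with the (implicit) minimality of the period $T_0$ discussed in the paragraph preceding the lemma. Writing $T_1 = nT_0 + r$ with $n \in \{0,1,2,\dots\}$ and $0 \le r < T_0$, my goal is to show $r = 0$; positivity $T_1 > 0$ will then force $n \ge 1$. As a preliminary, I would observe by a straightforward induction on $k$ that every positive integer multiple $kT_0$ is itself a period of $g$, i.e., $g(t + kT_0) = g(t)$ for all $t \in [0,\infty)$.

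The technical heart of the proof is then to show that the remainder $r$ is also a period of $g$. The naive calculation $g(t + r) = g(t + T_1 - nT_0) = g(t - nT_0) = g(t)$ is not directly available, because $g$ is defined only on $[0,\infty)$ and $t - nT_0$ may be negative. I would circumvent this by first checking the identity on the ``tail'' $t \ge nT_0$: writing $t = s + nT_0$ with $s \ge 0$, the chain
\[
g(t+r) = g(s + nT_0 + r) = g(s + T_1) = g(s) = g(s + nT_0) = g(t)
\]
uses only forward shifts, invoking in turn that $T_1$ is a period and that $nT_0$ is a period. For the remaining range $t \in [0, nT_0)$, I would pick $k \in \mathbb{N}$ large enough that $t + kT_0 \ge nT_0$ and apply $T_0$-periodicity to transport the identity back to $t$:
\[
g(t + r) = g(t + r + kT_0) = g(t + kT_0) = g(t).
\]

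Having shown that $r$ is a period of $g$ with $0 \le r < T_0$, minimality of $T_0$ forces $r = 0$, whence $T_1 = nT_0$ with $n \in \mathbb{N}$ as claimed. The main obstacle is the half-line domain, which prevents one from using negative translations freely; the ``shift by $kT_0$'' device above resolves this cleanly. Otherwise the argument is entirely routine, and it implicitly relies on reading ``period $T_0$'' as the \emph{minimal} period, as foreshadowed by the lemma's context.
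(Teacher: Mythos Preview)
Your proposal is correct and follows essentially the same approach as the paper: Euclidean division $T_1 = nT_0 + r$, show $r$ is a period, invoke minimality of $T_0$. The one difference is that your case split on $t \ge nT_0$ versus $t < nT_0$ is unnecessary: the paper uses the purely forward chain $g(t+r) = g((t+r)+nT_0) = g(t+T_1) = g(t)$, valid for all $t \ge 0$, so the half-line domain issue you flagged never actually arises.
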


\begin{proof}
    Let $T_1$ be a period  of $g$ and, for assume for contradiction  $\forall n \in \mathbb{N}, T_1 \neq nT_0$. Therefore, by the division algorithm, we take $T_1 = n_1 T_0 + r$ where $n_1 \in \mathbb{N}$ and $0 < r < T_0$. Since $g(t)$ is periodic with period $T_0$, we have $g(t) = g(t+T_0) = ... = g(t+n_1 T_0)$ for all $t \in [0,\infty)$. We can translate this  equation by $r>0$ to obtain $g(t+r) = g(t+n_1 T_0 +r) = g(t+T_1).$ Therefore, since $g(t)$ is also periodic with period $T_1, $ we know that $g(t) = g(t+r),$ and so $g$ is also periodic with period $r$. However, since we assumed that $T_0$ is the minimum period, but observed that $0<r < T_0$ is a period, we have a contradiction.
\end{proof}

The following lemmas help achieve boundedness in the proofs below supporting Theorem \ref{th:main}. 
\begin{lemma}\label{lemma1} Suppose $n \in   \mathbb{N}$ and $w \in   \mathbb{R}$ with $\frac{w}{2\pi} \notin \mathbb Z$. 
  Then \begin{equation} \sum_{k=0}^n \sin(kw) = \frac{\sin\left(\frac{wn}{2}\right)\sin\left(\frac{w(n+1)}{2}\right)}{\sin(w/2)}.\end{equation}
\end{lemma}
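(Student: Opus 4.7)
The plan is to establish this identity by the classical telescoping trick: multiply both sides by $2\sin(w/2)$, which is nonzero precisely because the hypothesis $w/(2\pi)\notin\mathbb{Z}$ guarantees $w/2$ is not an integer multiple of $\pi$. On the left, the product-to-sum identity
$$2\sin(w/2)\sin(kw) = \cos\bigl((k-\tfrac{1}{2})w\bigr)-\cos\bigl((k+\tfrac{1}{2})w\bigr)$$
converts each summand into a difference of cosines, setting up a telescoping cancellation.

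First, I would record this identity for each $k=0,1,\dots,n$ (the case $k=0$ is trivial: both sides vanish since $\sin(0)=0$ and $\cos(-w/2)=\cos(w/2)$). Summing from $k=0$ to $n$, the interior terms cancel in pairs and we are left with
$$2\sin(w/2)\sum_{k=0}^{n}\sin(kw) \;=\; \cos(w/2)-\cos\bigl((n+\tfrac{1}{2})w\bigr).$$
Next, I would apply the sum-to-product identity $\cos A-\cos B = 2\sin\bigl(\tfrac{A+B}{2}\bigr)\sin\bigl(\tfrac{B-A}{2}\bigr)$ with $A=w/2$ and $B=(n+\tfrac{1}{2})w$, which yields exactly
$$2\sin\!\left(\tfrac{(n+1)w}{2}\right)\sin\!\left(\tfrac{nw}{2}\right).$$
Dividing both sides by $2\sin(w/2)$, justified by the hypothesis, delivers the stated formula.

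There is essentially no serious obstacle here; the proof is index bookkeeping. The only subtleties are (i) verifying that the telescoping endpoints are $\cos(-w/2)=\cos(w/2)$ on the left side of the range and $-\cos((n+\tfrac{1}{2})w)$ on the right, and (ii) confirming the hypothesis is used precisely to ensure $\sin(w/2)\neq 0$, so that the division at the end is legitimate. An alternative route via the geometric series $\sum_{k=0}^n e^{ikw}=(1-e^{i(n+1)w})/(1-e^{iw})$ followed by symmetrization via $e^{-iw/2}$ and taking imaginary parts would reach the same conclusion, but the telescoping approach is shorter and avoids any detour through complex numbers.
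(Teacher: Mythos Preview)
Your proof is correct. The telescoping via the product-to-sum identity $2\sin(w/2)\sin(kw)=\cos((k-\tfrac12)w)-\cos((k+\tfrac12)w)$ collapses the sum exactly as you describe, and the hypothesis $w/(2\pi)\notin\mathbb{Z}$ is used precisely where you say, to legitimize the final division by $\sin(w/2)$.

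The paper takes the alternative route you mention at the end: it writes $\sum_{k=0}^n \sin(kw)$ as the imaginary part of the geometric series $\sum_{k=0}^n e^{ikw}$, sums that to $(e^{iw(n+1)}-1)/(e^{iw}-1)$, symmetrizes numerator and denominator by pulling out $e^{iw(n+1)/2}$ and $e^{iw/2}$ to produce sines, and then extracts the imaginary part. Your telescoping argument stays entirely within real trigonometry and is a line or two shorter. The paper's complex-exponential approach buys something in return: the companion identity for $\sum_{k=0}^n\cos(kw)$ (the paper's Lemma~\ref{lemma2}) drops out immediately by taking the real part of the same computation, whereas your method would require repeating the telescoping with $2\sin(w/2)\cos(kw)=\sin((k+\tfrac12)w)-\sin((k-\tfrac12)w)$. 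Either route is perfectly acceptable here.
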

\begin{proof}
The sine summation can be rewritten using Euler's identity, using the imaginary part $\mathscr I(\cdot)$, as $$\sum_{k=0}^{n} \sin(wk) = \mathscr{I}\left(\sum_{k=0}^{n} \cos(wk)+ i\sin(wk)\right) = \mathscr I\left(\sum_{k=0}^{n} (e^{iw})^k\right).$$ By hypothesis, $w$ is not an integer multiple of $2\pi$, and thus $e^{iw} \neq 1$. Therefore, the finite geometric series is equal to 
\begin{equation}\sum_{k=0}^{n} e^{iwk}=\dfrac{e^{iw(n+1)}-1}{e^{iw}-1} = \dfrac{e^{iw(n+1)/2}\left[e^{iw(n+1)/2}-e^{-iw(n+1)/2}\right]}{e^{iw/2}\left[e^{iw/2}-e^{-iw/2}\right]}.\end{equation} 
We observe \begin{align*} e^{iw(n+1)/2}-e^{-iw(n+1)/2} =&~ i\sin(w(n+1)/2) + \cos(w(n+1)/2) - i\sin(-w(n+1)/2) - \cos(-w(n+1)/2) \\ =& ~2i\sin(w(n+1)/2).
%\\  ~~\text{and} ~~~~&
%\\ e^{iw/2}-e^{-iw/2} =& ~2i\sin(w/2). 
\end{align*}
Thus we have
\begin{align*}
\mathscr I\left(\sum_{k=0}^{n} e^{iwk}\right) = & ~\mathscr I\left(\dfrac{e^{iw(n+1)/2}(2i\sin(w(n+1)/2))}{e^{iw/2}(2i\sin(w/2))}\right)  \\
=&~\mathscr I\left(\dfrac{e^{iwn/2}(\sin(w(n+1)/2))}{\sin(w/2)}\right).
\end{align*}
The denominator above is defined, since $w/2 \notin \mathbb Z$, as $w \notin 2\pi\mathbb Z$ by assumption. 
Using Euler's formula once more for the exponential, we have
\begin{equation} \mathscr I\left(\sum_{k=0}^{n} e^{iwk}\right) =\mathscr I\left([i\sin(wn/2) + \cos(wn/2)]\left(\dfrac{\sin(w(n+1)/2)}{\sin(w/2)}\right)\right).\end{equation}
Finally, we obtain
\begin{equation}\sum_{k=0}^{n} \sin(wk) = \mathscr I\left(\sum_{k=0}^{n} e^{iwk}\right)=\dfrac{\sin(wn/2)\sin(w(n+1)/2)}{\sin(w/2)},\end{equation}
as desired.
\end{proof}
The lemma for cosine follows similarly, using the real part in the above argument.

\begin{lemma}\label{lemma2} Suppose $n \in   \mathbb{N}$ and $w \in   \mathbb{R}$ with $\frac{w}{2\pi} \notin \mathbb Z$. Then \begin{equation} 
\sum_{k=0}^n \cos(kw) = \dfrac{\cos\left(\dfrac{wn}{2}\right)\sin\left(\dfrac{w(n+1)}{2}\right)}{\sin(w/2)}.\end{equation}
\end{lemma}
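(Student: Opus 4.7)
The plan is to mirror the proof of Lemma \ref{lemma1} almost verbatim, replacing the imaginary part $\mathscr I(\cdot)$ with the real part $\mathscr R(\cdot)$. The hypothesis $w/(2\pi) \notin \mathbb Z$ is used in exactly the same way, guaranteeing both that $e^{iw}\neq 1$ (so the geometric sum has no denominator issues) and that $\sin(w/2)\neq 0$ (so the final expression is well defined).

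First, I would invoke Euler's identity to write
$$\sum_{k=0}^n \cos(kw) \;=\; \mathscr R\!\left(\sum_{k=0}^n e^{iwk}\right),$$
and then apply the finite geometric series formula since $e^{iw}\neq 1$:
$$\sum_{k=0}^n e^{iwk} \;=\; \frac{e^{iw(n+1)}-1}{e^{iw}-1}.$$
Next, I would perform the same half-angle factorization used in the proof of Lemma \ref{lemma1}, pulling $e^{iw(n+1)/2}$ out of the numerator and $e^{iw/2}$ out of the denominator, and then recognizing the resulting bracketed differences as $2i\sin(w(n+1)/2)$ and $2i\sin(w/2)$ respectively. The $2i$ factors cancel, yielding
$$\sum_{k=0}^n e^{iwk} \;=\; \frac{e^{iwn/2}\,\sin(w(n+1)/2)}{\sin(w/2)}.$$

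The final step is simply to take the real part of the right-hand side. Since $\sin(w(n+1)/2)/\sin(w/2)$ is a real scalar, applying $\mathscr R$ commutes with division by that scalar, and $\mathscr R(e^{iwn/2}) = \cos(wn/2)$. This gives
$$\sum_{k=0}^n \cos(kw) \;=\; \frac{\cos(wn/2)\,\sin(w(n+1)/2)}{\sin(w/2)},$$
as desired. There is no real obstacle here; the only subtle point is confirming that taking $\mathscr R$ of a complex number divided by a real (nonzero) quantity is the same as dividing the real part by that quantity, which is immediate from linearity of $\mathscr R$ over $\mathbb R$. Given that the paper explicitly flags this result as analogous to Lemma \ref{lemma1}, I would write the proof as a brief three- or four-line computation rather than repeat the full derivation.
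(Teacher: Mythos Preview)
Your proposal is correct and matches the paper's approach exactly: the paper does not give a separate proof but simply remarks that ``the lemma for cosine follows similarly, using the real part in the above argument.'' Your write-up carries this out in detail, and the observation that $\sin(w(n+1)/2)/\sin(w/2)$ is real so that $\mathscr R$ passes through is precisely the point needed.
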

%\begin{proof}
%    For the cosine summation, it can be rewritten as $\sum_{k=0}^n \cos(\frac{2\pi k m}{n}) = R(\sum_{k=0}^n (\cos(kw)+ i\sin(kw))) = R(\sum_{k=0}^{n-1} (e^{iw})^k).$ Since $\forall g \in   \mathbb{Z}, w \neq 2\pi g, $ we know that $ w \neq 2\pi \cdot 0 = 0, e^{iw} \neq 1.$ Therefore, the finite geometric series is equal to 
%    
%    \begin{equation}R(\frac{e^{iw(n+1)}-1}{e^{iw}-1}) = R(\frac{e^{iw(n+1)/2}(e^{iw(n+1)/2}-e^{-iw(n+1)/2})}{e^{iw/2}(e^{iw/2}-e^{-iw/2})}).\end{equation}
%    
%    Since $e^{iw(n+1)/2}-e^{-iw(n+1)/2} = i\sin(w(n+1)/2) + \cos(w(n+1)/2) - i\sin(-w(n+1)/2) - \cos(-w(n+1)/2) = 2i\sin(w(n+1)/2)$ and $e^{iw/2}-e^{-iw/2} = 2i\sin(w/2)$, the term becomes
%
%\begin{equation}I(\frac{e^{iw(n+1)/2}(2i\sin(w(n+1)/2))}{e^{iw/2}(2i\sin(w/2))}) = I(\frac{e^{iwn/2}(\sin(w(n+1)/2))}{\sin(w/2)}).\end{equation}
%
%Using Euler's formula on the exponential term, it becomes
%
%\begin{equation}R((i\sin(wn/2) + \cos(wn/2))(\frac{\sin(w(n+1)/2)}{\sin(w/2)})).\end{equation}
%
%Rearranging terms, we get 
%
%\begin{equation}R(\frac{i\sin(wn/2)\sin(w(n+1)/2)}{\sin(w/2)} + \frac{\cos(wn/2)\sin(w(n+1)/2)}{\sin(w/2)}).\end{equation}
%
%Since we are only considering the real term, the solution becomes
%
%\begin{equation}\frac{\cos(wn/2)\sin(w(n+1)/2)}{\sin(w/2)}.\end{equation}
%\end{proof}

\subsection{Proof of (1.) in Theorem \ref{th:main}}
\begin{proof} Assume that $T_2/T_1=\alpha \notin  \mathbb{Q}_+$. We will first show that the solution $x(t)$ is bounded. 

By the variation of parameters formula and the initial conditions $x(0) = \dot x(0) = 0$, the unique solution of the oscillator ODE is
\begin{equation} \label{integrandy} x(t) = \frac{\sin(\omega_0 t)}{\omega_0} \int_0^t f(\tau)\cos(\omega_0 \tau)d\tau - \frac{\cos(\omega_0 t)}{\omega_0} \int_0^t f(\tau)\sin(\omega_0 \tau)d\tau.\end{equation}
As $f \in L^{\infty}(\mathbb R_+)$, the integrands in \eqref{integrandy} are bounded. Multiplying by sinusoids, of course preserves boundedness, so we must only show that  the time integrals are bounded for all $t \in [0,\infty)$. This will be accomplished via Lemmas \ref{lemma1} and \ref{lemma2}. 

We will first observe that for all $n_1 \in  \mathbb{N}$,
\begin{equation}\label{summ}
\sum_{k=0}^{n_1} \cos(2\pi k \alpha) \le \dfrac{1}{|\sin(\pi \alpha )|}.\end{equation}
To that end,  let $w = 2\pi\alpha$. Therefore, since $\pi \alpha \notin \pi\mathbb N$, Lemma \ref{lemma2} holds in this case. The cosine sum is thus represented as
\begin{equation}\sum_{k=0}^{n_1}\cos(wk) = \frac{\cos(wn_1/2)\sin(w(n_1+1)/2)}{\sin(w/2)}.\end{equation}
Taking absolute values and replacing the numerator $|\cos(wn_1/2)\sin(w(n_1+1)/2)|\le1$, we obtain, with
$w=2\pi \alpha$, the inequality \eqref{summ}. Similarly, invoking Lemma \ref{lemma1}, yields \begin{equation}\sum_{k=0}^{n_1}\sin(wk) = \frac{\sin(wn_1/2)\sin(w(n_1+1)/2)}{\sin(w/2)}.\end{equation}
From this, we deduce that
 \begin{equation}\label{summ1} \sum_{k=0}^{n_1} \sin(2\pi k \alpha) \le \dfrac{1}{|\sin(\pi \alpha)|}.\end{equation}
 Since $\alpha \notin \mathbb Q_+$, we observe that the denominator of the RHS does not vanish, and the summation is finite. 

We now proceed to bound $$\int_0^t f(\tau)\sin(\omega_0 \tau)d\tau~~\text{ and }~~\int_0^t f(\tau)\cos(\omega_0 \tau)d\tau$$ for all $t \in [0,\infty)$.
Let $t \in [0,\infty)$. By the division algorithm, with $T_2>0$ given, we know that $t = n_2 T_2 + r_2$ where $n_2 \in  \mathbb{N}$ and $0 \leq r_2 < T_2$. We then break the integral term as
$$\int_0^tf(\tau)\sin(\omega_0\tau)d\tau=\sum_{k=0}^{n_2-1}\int_{kT_2}^{(k+1)T_2} f(\tau)\sin(\omega_0 \tau)d\tau+ \int_{n_2 T_2}^{n_2 T_2 + r_2} f(\tau)\sin(\omega_0 \tau)d\tau.$$

By periodicity of the forcing function $f$, we know that $\forall t \in [0,\infty),~ f(t) = f(t+T_2) = ... = f(t+n_2 T_2).$ Thus, by direct substitution in the sum, the integral can be rewritten as
\begin{equation} \int_0^tf(\tau)\sin(\omega_0\tau)d\tau=\int_0^{T_2} f(\tau) \sum_{k=0}^{n_2-1} \sin(\omega_0 (\tau+kT_2)) d\tau + \int_{n_2 T_2}^{n_2 T_2 + r_2} f(\tau)\sin(\omega_0 \tau)d\tau.\end{equation}

We recall that $T_2 = \alpha T_1$ and $T_1 = \frac{2\pi}{\omega_0}$, thus  $\omega_0 kT_2  = 2\pi k \alpha$. Therefore by the sine addition formula, we have
\begin{equation*} \sum_{k=0}^{n_2-1} \sin(\omega_0 (\tau+kT_2)) =\sum_{k=0}^{n_2-1} \sin(\omega_0 \tau + 2\pi k\alpha) = \sin(\omega_0 \tau)\sum_{k=0}^{n_2-1} \cos(2\pi k \alpha) + \cos(\omega_0 \tau) \sum_{k=0}^{n_2-1} \sin(2\pi k \alpha). \end{equation*}

Using the triangle inequality and the bounds in \eqref{summ} and \eqref{summ1} we obtain
\begin{align}\left|\sum_{k=0}^{n_2-1} \sin(\omega_0 \tau + 2\pi k\alpha)\right| \le&~ |\sin(\omega_0 \tau)|\left|\sum_{k=0}^{n_2-1} \cos(2\pi k \alpha)\right| + |\cos(\omega_0 \tau)|\left| \sum_{k=0}^{n_2-1} \sin(2\pi k \alpha)\right|
\le&~\dfrac{2}{|\sin(\pi \alpha)|}. \end{align}

We now bound the integral expression in the variation of parameters formula:
\begin{align}
\left|\int_0^tf(\tau)\sin(\omega_0\tau)d\tau\right| \le &~\left|\int_0^{T_2} f(\tau) \sum_{k=0}^{n_2-1} \sin(\omega_0 \tau+k2\pi \alpha)) d\tau\right|+ \left|\int_{n_2 T_2}^{n_2 T_2 + r_2} f(\tau)\sin(\omega_0 \tau)d\tau\right| \\
\le & ~||f||_{L^{\infty}(0,T_2)}\int_0^{T_2}\left| \sum_{k=0}^{n_2-1} \sin(\omega_0 \tau+k2\pi \alpha)) d\tau\right|\\&+\nonumber ||f||_{L^{\infty}(0,T_2)}\int_{n_2 T_2}^{n_2 T_2 + r_2}|\sin(\omega_0 \tau)|d\tau \\ 
\le & ~||f||_{L^{\infty}(0,T_2)}\left[\dfrac{2T_2}{|\sin(\pi \alpha)|} + \int_{n_2 T_2}^{n_2 T_2 + r_2}|\sin(\omega_0 \tau)|d\tau\right] \\ 
\le & ~||f||_{L^{\infty}(0,T_2)}\left[\dfrac{2T_2}{|\sin(\pi \alpha)|} +r_2\right].
\end{align}
Hence we observe the boundedness 
\begin{equation}\label{thisone*}\left|\int_0^tf(\tau)\sin(\omega_0\tau)d\tau\right| \le||f||_{L^{\infty}(0,T_2)}\left[\dfrac{2T_2}{|\sin(\pi \alpha)|} +r_2\right].\end{equation}

In a completely analogous way,  $\ds \int_0^t f(\tau)\cos(\omega_0 \tau)d\tau$ is bounded, with identical bound as above in \eqref{thisone*}.
Boundedness of the solution $x(\cdot)$ follows immediately by applying the triangle inequality to the variation of parameters formula
\begin{equation*}x(t) = \frac{\sin(\omega_0 t)}{\omega_0} \int_0^t f(\tau)\cos(\omega_0 \tau)d\tau - \frac{\cos(\omega_0 t)}{\omega_0} \int_0^t f(\tau)\sin(\omega_0 \tau)d\tau\end{equation*}
and invoking the previous bounds.
In particular, we have
$$|x(t)| \le \dfrac{|\sin(\omega_0t)|}{\omega_0}\left|\int_0^tf(\tau)\cos(\omega_0\tau)d\tau\right|+\dfrac{|\cos(\omega_0t)|}{\omega_0}\left|\int_0^tf(\tau)\sin(\omega_0\tau)d\tau\right|,$$
from which we have the bound (taking the supremum for $t \in [0,\infty)$ and noting that $0\le r_2<T_2$)
\begin{equation}
||x(t)||_{L^{\infty}(\mathbb R_+)} \le \dfrac{2T_2}{\omega_0}||f||_{L^{\infty}(\mathbb R_+)}\left[\dfrac{2}{ |\sin(\pi \alpha)|}+1\right].
\end{equation}

We now proceed to show that the solution $x(t)$ is not periodic for any possible period $T \in \mathbb R_+$.
%Our strategy for the proof is as follows in this paragraph. First, we will try to show that $\forall t \in A = \{t \in [0,\infty): f(t) $ is continuous$\}, x(t) = 0.$ We will do this by first constraining the space of potential periods from some $T \in (0,\infty)$, to know that $\exists n_1 \in \mathbb{N}, T = n_1 T_2$. Second, using the previous fact, we show algebraically that $\int_0^{n_1 T_2} f(\tau)\cos(\omega_0 t)d\tau = \int_0^{n_1 T_2} f(\tau)\sin(\omega_0)d\tau = 0$. Third, using the previous fact, we will find that $\int_0^t f(\tau)\cos(\omega_0 t)d\tau = \int_0^t f(\tau)\sin(\omega_0)d\tau = 0.$ We will use the previous fact to show that $x(t) = 0$ for all $t \in A.$ For all other points, $t \in [0,\infty) - A,$ we can then also show $x(t) = 0.$ Therefore, $\forall t \in [0,\infty), x(t) = 0.$ We use this fact to show $x''(t) = 0$ and then $f(t) = x''(t) + \omega_0^2 x(t) = 0 + \omega_0^2 * 0 = 0.$ We then know that $f(t) = 0$ is periodic with no minimum and we assumed $f(t)$ is periodic with a minimum period. Therefore, a contradiction occurs and our assumption that $x(t)$ is periodic is false.
Assume for the sake of contradiction that $x(t)$ is periodic with some period $T \in (0,\infty).$ We will first constrain the possibilities for $T$ by invoking Lemma \ref{lemma3}. To do so, we note that on any open interval $I \subset [0,T_2]$ of continuity for $f$, the oscillator ODE in \eqref{sys1**} holds classically and $x(t) \in C^2(I)$. Let $A$ be the union of such intervals, i.e., $A=(0,T_2)\setminus \{x \in (0,T_2)~:~f \text{ is discontinuous at } x\}$. Then, since we have $x(t)=x(t+T)$ by hypothesis, by Lemma \ref{difflemma}, for all $t \in A$ we note that $\ddot x(t)=\ddot x(t+T)$. From this we infer from the ODE \eqref{oscillator}  that $f(t)=f(t+T)$ for all $t \in A$. However, for any $t_0 \notin A$, the value $t_0$ represents a point of either left or right continuity (by assumption on $f$). WLOG, say that $f$ is right continuous at $t_0$. Then for all $t \in A$ with $t>t_0$ we have $f(t+T)=\ddot x(t+T)+\omega_0^2x(t+T)=\ddot x(t)+\omega_0^2x(t)=f(t)$, and taking the limit as $t\searrow t_0$ yields that $f(t_0+T)=f(t_0)$. Hence, for all $t \in [0,T_2]$ we know that $f(t+T)=f(t)$. By Lemma \ref{lemma3}, we know immediately that $T=nT_2$ for some $n \in \mathbb N$.

We will now show $$\int_0^{n T_2} f(\tau)\cos(\omega_0 \tau) d\tau = \int_0^{n T_2} f(\tau)\sin(\omega_0 \tau) d\tau = 0.$$ 
Enforcing the initial conditions, we know that $x(0) = 0$ and $\dot x(0)=0$. Moreover, since $x$ is periodic with period $T$ and we have $T=nT_2$, we have $0=x(0) = x(nT_2)$ and  $0 = \dot x(0) = \dot x(nT_2)$. Expanded in a $2\times2$ system, we have
\begin{align*}
    0 =&~ \frac{\sin(\omega_0 n T_2)}{\omega_0} \int_0^{n T_2}\cos(\omega_0\tau)f(\tau)d\tau - \frac{\cos(\omega_0 n T_2)}{\omega_0} \int_0^{n T_2}\sin(\omega_0\tau)f(\tau)d\tau, \\
       0 =&~ \cos(\omega_0 n T_2) \int_0^{n T_2}\cos(\omega_0\tau)f(\tau)d\tau + \sin(\omega_0 n T_2) \int_0^{n T_2}\sin(\omega_0\tau)f(\tau)d\tau.
\end{align*}
Solving for the integral terms, we must check the invertibility of the matrix
\begin{equation}\label{matrix}
    \begin{bmatrix}
\dfrac{\sin(\omega_0 nT_2)}{\omega_0} &  \dfrac{-\cos(\omega_0 n T_2)}{\omega_0} \\
\cos(\omega_0 n T_2) & \sin(\omega_0 nT_2)
\end{bmatrix}
\end{equation}
(This is equivalent to verifying that the Wronskian  $W\left[\sin(\omega_0t),\cos(\omega_0t)\right]$ does not vanish at $t=nT_2$.)  But $W(t)= \omega_0^{-1}\neq 0$ here, and thus the unique solution is the zero vector, demonstrating that
 \begin{equation}\label{neednow} \int_0^{n T_2}\cos(\omega_0\tau)f(\tau)d\tau = \int_0^{n T_2}\sin(\omega_0\tau)f(\tau)d\tau = 0.\end{equation}
These identities will be central in obtaining our main contradiction below.

We will now show that $\int_0^t f(\tau)\cos(\omega_0 t)d\tau = \int_0^t f(\tau)\sin(\omega_0)d\tau = 0$ for all $t \in [0,\infty)$. Recall that $x(t)=x(t+T)$ for all $t \in [0,\infty)$, but since we have shown that $T=nT_2$ for some $n \in \mathbb N$, we know 
\begin{equation*}\begin{cases}
x(t)-x(t+nT_2)=&~0, \\
\dot x(t)-\dot x(t+nT_2)=&~0,
\end{cases}
\end{equation*}
for all $t \in [0,\infty)$, since the $x, \dot x \in C([0,\infty)).$ These two equations will produce another invertible matrix system which will yield our desired conclusion. 
Using the first equation above, we expand  $\omega_0 x(t+nT_2) = \omega_0 x(t)$ via the variation of parameters formulate to obtain 
\begin{align}\label{laterreference}
     \sin(\omega_0& (t + nT_2)) \int_0^{t + n T_2} f(\tau) \cos(\omega_0 \tau) d\tau - \cos(\omega_0 (t + n T_2)) \int_0^{t + n T_2} f(\tau) \sin(\omega_0 \tau) d\tau \nonumber \\
     = & ~
    \sin(\omega_0 t) \int_0^{t} f(\tau) \cos(\omega_0 \tau) d\tau - \cos(\omega_0 t) \int_0^{t} f(\tau) \sin(\omega_0 \tau) d\tau.
\end{align}
Breaking the first integrals as $(0,t+nT_2)=(0,t)\cup(t,t+nT_2)$, we obtain
\begin{align}\nonumber
\sin(\omega_0& (t+n T_2)) \int_t^{t+n T_2} f(\tau) \cos(\omega_0 \tau) d\tau - \cos(\omega_0 (t+n T_2)) \int_t^{t + n T_2} f(\tau) \sin(\omega_0 \tau) d\tau \\ 
= &~\nonumber
     -\Big(\sin(\omega_0 (t + n T_2))- \sin(\omega_0 t)\Big) \int_0^{t} f(\tau) \cos(\omega_0 \tau) d\tau \\
      &  + \Big(\cos(\omega_0 (t + n T_2)) - \cos(\omega_0 t)\Big) \int_0^{t} f(\tau) \sin(\omega_0 \tau) d\tau. \label{willbeusing}
\end{align}
We now consider the LHS above, and utilize the sine subtraction formula, as well as the Duhamel representation of the unique solution:
\begin{align}\nonumber
\sin(\omega_0& (t+n T_2)) \int_t^{t+n T_2} f(\tau) \cos(\omega_0 \tau) d\tau - \cos(\omega_0 (t+n T_2)) \int_t^{t + n T_2} f(\tau) \sin(\omega_0 \tau) d\tau \\ = &~\nonumber \int_t^{t+n T_2} f(\tau) \sin(\omega_0 (t + n T_2 -\tau)) d\tau \\ = &~\nonumber 
 \int_0^{t+n T_2} f(\tau) \sin(\omega_0 (t + n T_2 -\tau)) d\tau -  \int_0^t f(\tau) \sin(\omega_0 (t + n T_2 -\tau)) d\tau \\ =&~\omega_0x(t+n T_2) -  \int_0^t f(\tau) \sin(\omega_0 (t + n T_2 -\tau)) d\tau. \label{willbeusing1}
\end{align}
 We now focus on the integral term in \eqref{willbeusing1}; we break it, make a substitution to, and again invoke the Duhamel solution representation:
\begin{align}\nonumber
\int_0^t f(\tau) \sin(\omega_0 (t + n T_2 -\tau)) d\tau=& ~\left[\int_0^{nT_2}+\int_{n T_2}^t\right] f(\tau) \sin(\omega_0 (t + n T_2 -\tau)) d\tau \\
=&~\int_{0}^{t - n T_2} f(\tau) \sin(\omega_0 (t - \tau)) d\tau + \int_{0}^{n T_2} f(\tau) \sin(\omega_0 (t + n T_2 -\tau)) d\tau \nonumber\\
=&~\omega_0x(t-n T_2) + \int_{0}^{n T_2} f(\tau) \sin(\omega_0 (t + n T_2 -\tau)) d\tau \nonumber\\
=& ~\omega_0x(t-nT_2) +\sin(\omega_0 (t+n T_2)) \int_{0}^{n T_2} f(\tau) \cos(\omega_0 \tau) d\tau\nonumber \\ & - \cos(\omega_0 (t+n T_2)) \int_{0}^{n T_2} f(\tau) \sin(\omega_0 \tau) d\tau, \label{willbeusing3}
\end{align}
where in the last lines we restrict to $t \geq n T_2$, and carry this through the rest of the calculation. 
However, in \eqref{neednow}, we showed that $\int_{0}^{n T_2} f(u) \sin(\omega_0 u) du = \int_{0}^{n T_2} f(u) \cos(\omega_0 u) du = 0.$ Therefore, $$\int_0^t f(\tau) \sin(\omega_0 (t + n T_2 -\tau)) d\tau=\omega_0 x(t-n T_2),$$ and we can combine this identity with \eqref{willbeusing} and \eqref{willbeusing1} to obtain for all $t \ge nT_2$:
\begin{align}\label{laterreference1} \nonumber
  0=   \omega_0x(t+n T_2) -  \omega_0x(t - n T_2) = &~
    -\Big(\sin(\omega_0 (t + n T_2))- \sin(\omega_0 t)\Big) \int_0^{t} f(\tau) \cos(\omega_0 \tau) d\tau \\ &+ \Big(\cos(\omega_0 (t + n T_2)) - \cos(\omega_0 t)\Big) \int_0^{t} f(\tau) \sin(\omega_0 \tau) d\tau.
\end{align}

Using sum-to-product formulas, we obtain the equality
%Since $\sin(\omega_0 (t + n T_2)) - \sin(\omega_0 t) = 2\cos(\omega_0 (t - \frac{n T_2}{2}))\sin(\omega_0 \frac{n T_2}{2})$ and $\cos(\omega_0 (t + n T_2)) - \cos(\omega_0 t) = -2\sin(\omega_0 (t - \frac{n T_2}{2}))\sin(\omega_0 \frac{n T_2}{2})$, the equation becomes
\begin{equation*}
    0 = \cos\Big(\omega_0 (t - \frac{n T_2}{2})\Big)\sin(\omega_0 \frac{n T_2}{2}) \int_0^{t} f(\tau) \cos(\omega_0 \tau) d\tau +\sin\Big(\omega_0 (t - \frac{n T_2}{2})\Big)\sin(\omega_0 \frac{n T_2}{2}) \int_0^{t} f(\tau) \sin(\omega_0 \tau) d\tau.
\end{equation*}
However,  $$\omega_0\frac{n T_2}{2} = \frac{2\pi}{T_1}\frac{n T_2}{2} = \pi  n \alpha \not \in \pi \mathbb Z,$$ since $ T_2 = \alpha T_1$ and we assumed $\alpha \notin \mathbb Q_+$. Then we know that  
 $\sin(\omega_0 \frac{n T_2}{2})=\sin(\pi \alpha n) \neq 0.$ Therefore in the above, we divide by $\sin(\omega_0\frac{n T_2}{2})$ to obtain the identity
\begin{equation}\label{bigone}
    0 = \cos\Big(\omega_0 (t - \frac{n T_2}{2})\Big) \int_0^{t} f(\tau) \cos(\omega_0 \tau) d\tau + \sin\Big(\omega_0 (t - \frac{n T_2}{2})\Big) \int_0^{t} f(\tau) \sin(\omega_0 \tau) d\tau.
\end{equation}

Now, repeating the same analysis for  $\dot x(t) = \dot x(t+n T_2)$, we obtain a similar identity

\begin{equation}\label{bigone1}
 0=    \sin\Big(\omega_0 (t-\frac{n T_2}{2})\Big) \int_0^t f(\tau)\cos(\omega_0 \tau) d\tau - \cos\Big(\omega_0 (t-\frac{n T_2}{2})\Big) \int_0^t f(\tau) \sin(\omega_0 \tau) d\tau.
\end{equation}
We combine the above equalities into another system:
\begin{equation}
    \begin{bmatrix}
\cos(\omega_0 (t-\frac{n T_2}{2})) & \sin(\omega_0 (t-\frac{n T_2}{2})) \\
\sin(\omega_0 (t-\frac{n T_2}{2})) & - \cos(\omega_0 (t-\frac{n T_2}{2})) 
\end{bmatrix}
\begin{bmatrix}
    \int_0^t f(\tau)\cos(\omega_0 \tau) d\tau \\
    \int_0^t f(\tau) \sin(\omega_0 \tau) d\tau
\end{bmatrix}
 = 
 \begin{bmatrix}
     0 \\
     0
 \end{bmatrix}
\end{equation}
Again, the matrix above is invertible, and hence, for all $t \ge nT_2$, that 
$$\int_0^t f(\tau)\cos(\omega_0 \tau) d\tau = \int_0^t f(\tau) \sin(\omega_0 \tau) d\tau = 0.$$
But, by the Duhamel formula
\begin{equation*}x(t) = \frac{\sin(\omega_0 t)}{\omega_0} \int_0^t f(\tau)\cos(\omega_0 \tau)d\tau - \frac{\cos(\omega_0 t)}{\omega_0} \int_0^t f(\tau)\sin(\omega_0 \tau)d\tau\end{equation*}
we see that $x(t) \equiv 0$ for all $t \ge nT_2$. On the other hand, the assumed periodicity of $x(t+nT_2)=x(t)$ for all $t \in [0,\infty)$, gives that $x(t) \equiv 0$. But this violates that $x$ has period $T>0$ (and subsequently that $f$ has period $T_2$). Hence $x$ cannot be periodic with period $T>0$.
\end{proof}

\subsection{Proof of (2.) in Theorem \ref{th:main}}
We now assume that $T_2=\dfrac{m}{n}T_1$ in lowest terms for $m \in \mathbb N$ and $n \in \mathbb N_{\ge 2}$, with $\frac{m}{n} \notin \mathbb N$. We will show that the solution $x(\cdot) \in L^{\infty}(\mathbb R_+)$  is periodic with period $T_3=nT_2=mT_1$. 
We begin with two observations resulting from Lemmas \ref{lemma1} and \ref{lemma2}. For  $T_2=\dfrac{m}{n}T_1$, we observe by Lemma \ref{lemma1} that
 \begin{equation}\label{need1}\sum_{k=0}^{n-1} \sin\left(2\pi \frac{m}{n}k\right) = \dfrac{\sin(2\pi \frac{m}{n} (n-1)/2)\sin( \pi m)}{\sin(\pi \frac{m}{n})},\end{equation}
 and by Lemma \ref{lemma2} that
\begin{equation}\label{need2} \sum_{k=0}^{n-1} \cos\left(2\pi \frac{m}{n}k\right) = \frac{\cos(2\pi \frac{m}{n} (n-1)/2)\sin( \pi m)}{\sin(\pi \frac{m}{n})}.\end{equation}
    Since $m \in  \mathbb{N},$ we have that $\sin(\pi m) = 0.$ Therefore, both summations above are identically zero.

We now proceed to show that $x(\cdot)$ is periodic with period $T_3$.
 Again, by the variation of parameters formula with $x(0) = \dot x(0) = 0$, we have
\begin{equation*}x(t+T_3) = \dfrac{\sin(\omega_0(t+T_3))}{\omega_0}\int_0^{t+T_3}\cos(\omega_0\tau)f(\tau)d\tau-\dfrac{\cos(\omega_0(t+T_3))}{\omega_0}\int_0^{t+T_3}\sin(\omega_0\tau)f(\tau)d\tau.\end{equation*}

Since $T_3 = nT_2 = m T_1$, and $T_1=\frac{2\pi}{\omega_0}$, we have that 
$$\sin(\omega_0 t) = \sin(\omega_0 (t+T_1)) = \sin(\omega_0 (t+2T_1)) = ... = \sin(\omega_0 (t+mT_1))$$ as well as 
$$\cos(\omega_0 t) = \cos(\omega_0 (t+T_1)) = \cos(\omega_0 (t+2T_1)) = ... = \cos(\omega_0 (t+mT_1)).$$ 

The identity for $x(t+T_3)$ then becomes
\begin{equation} x(t+T_3) = \dfrac{\sin(\omega_0 t)}{\omega_0}\int_0^{t+T_3}\cos(\omega_0\tau)f(\tau)d\tau-\dfrac{\cos(\omega_0 t)}{\omega_0}\int_0^{t+T_3}\sin(\omega_0\tau)f(\tau)d\tau.\end{equation}
We now break the integrals as
\begin{align*} x(t+T_3) =&~ \dfrac{\sin(\omega_0 t)}{\omega_0}\left(\int_0^{t}\cos(\omega_0\tau)f(\tau)d\tau +\int_{t}^{t+T_3}f(\tau)\cos(\omega_0\tau)d\tau\right) \\&-\dfrac{\cos(\omega_0 t)}{\omega_0}\left(\int_0^{t}\sin(\omega_0\tau)f(\tau)d\tau+\int_{t}^{t+T_3}f(\tau)\sin(\omega_0\tau)d\tau\right).\end{align*}
We demonstrate that the integrals on $(t,t+T_3)$ are zero by differentiating, using the fundamental theorem of calculus
$$\dfrac{d}{dt}\left(\int_{t}^{t+T_3}f(\tau)\cos(\omega_0\tau)d\tau\right)= f(t+T_3)\cos(\omega_0(t+T_3))-f(t)\cos(\omega_0t),$$
$$\dfrac{d}{dt}\left(\int_{t}^{t+T_3}f(\tau)\sin(\omega_0\tau)d\tau\right)= f(t+T_3)\sin(\omega_0(t+T_3))-f(t)\sin(\omega_0t).$$ Since $T_3$ is a coincident period for $T_1$ and $T_2$ (i.e., $nT_2=mT_1=T_3$), we observe that both quantities are zero (and are equal to their values at $t=0$). This implies that both integrals are constant, so we have for all $t \in [0,\infty)$ that
\begin{align*}
\int_{t}^{t+T_3}f(\tau)\cos(\omega_0\tau)d\tau = ~\int_0^{T_3}f(\tau)\cos(\omega_0\tau) d\tau;~~
\int_{t}^{t+T_3}f(\tau)\sin(\omega_0\tau)d\tau =~ \int_0^{T_3}f(\tau)\sin(\omega_0\tau) d\tau
\end{align*}
To complete the proof, we will show that $\ds \int_{0}^{T_3}\sin(\omega_0\tau)f(\tau)d\tau = 0.$ Since $T_3 = n T_2$, we break the integral as
\begin{equation*} \int_0^{T_3}f(\tau)\sin(\omega_0\tau) d\tau = \int_0^{T_2} f(\tau)\sin(\omega_0 \tau)d\tau + \int_{T_2}^{2T_2} f(\tau)\sin(\omega_0 \tau)d\tau + ... + \int_{(n-1)T_2}^{nT_2} f(\tau)\sin(\omega_0 \tau)d\tau.\end{equation*}
Via substitution, we obtain
\begin{equation}  \int_0^{T_3}f(\tau)\sin(\omega_0\tau) d\tau = \sum_{k=1}^{n} \int_{0}^{T_2} f\Big(\tau+(k-1)T_2)\Big)\sin\Big(\omega_0 (\tau+(k-1)T_2)\Big)d\tau.\end{equation}
As $f$ is periodic with assumed period $T_2$, we obtain
\begin{align*} \int_0^{T_3}f(\tau)\sin(\omega_0\tau) d\tau =&~ \int_0^{T_2} f(\tau)\sum_{k=0}^{n-1} \sin(\omega_0(\tau+kT_2)) d\tau \\ =&~ 
\int_0^{T_2} f(\tau)\sum_{k=0}^{n-1}\left[\sin(\omega_0 \tau)\cos(2\pi k \frac{m}{n}) + \cos(\omega_0 \tau)\sin(2\pi k \frac{m}{n})\right]d\tau \\
=&~\int_0^{T_2} f(\tau)\left[\sin(\omega_0 \tau)\sum_{k=0}^{n-1}\cos(2\pi k \frac{m}{n}) + \cos(\omega_0 \tau)\sum_{k=0}^{n-1}\sin(2\pi k \frac{m}{n})\right]d\tau.\end{align*}
But by identities \eqref{need1} and \eqref{need2}, we observe that both sums evaluate to zero, and 
$$\int_0^{T_3}f(\tau)\sin(\omega_0\tau) d\tau =0,$$ as desired. 

The proof that $$\int_0^{T_3}f(\tau)\cos(\omega_0\tau) d\tau =0,$$ proceeds analogously.
Returning to the identity from earlier, we then have
\begin{align*} x(t+T_3) =&~ \dfrac{\sin(\omega_0 t)}{\omega_0}\left(\int_0^{t}\cos(\omega_0\tau)f(\tau)d\tau +\int_{t}^{t+T_3}f(\tau)\cos(\omega_0\tau)d\tau\right) \\&-\dfrac{\cos(\omega_0 t)}{\omega_0}\left(\int_0^{t}\sin(\omega_0\tau)f(\tau)d\tau+\int_{t}^{t+T_3}f(\tau)\sin(\omega_0\tau)d\tau\right) \\
=& ~\dfrac{\sin(\omega_0 t)}{\omega_0}\int_0^{t}\cos(\omega_0\tau)f(\tau)d\tau -\dfrac{\cos(\omega_0 t)}{\omega_0}\int_0^{t}\sin(\omega_0\tau)f(\tau)d\tau = x(t).\end{align*}
Thus we have shown that $x(t)$ is periodic with period at most $T_3$.
As $x(\cdot) \in C([0,T_3])$ is the unique ODE solution on the closed interval, it is necessarily bounded. Moreover, extending periodically to all of $\mathbb R_+$, we observe that $x \in L^{\infty}(\mathbb R_+).$

Now we show the minimality of $T_3$ so that $T_3$ is the period of $x(\cdot)$. This argument mirrors the proof of non-periodicity in (1.). Assume to the contrary that $T \in \mathbb{R_{+}}$ is a period of $x(\cdot)$ with $T < T_3$.    Since,  $ \forall t \in \mathbb{R_{+}},$ we have $x(t) = x(t+T_3),$ we know that, by Lemma \ref{lemma3}, $\exists k \in \mathbb{N}, T_3 = kT.$ Therefore, $T = \frac{1}{k} T_3 = \frac{n}{k} T_2 = \frac{m}{k} T_1.$ We also have that $x(t) = x(t+T)$, and, as argued before, $T$ is a period of $f(\cdot)$; since $T_2$ is the (minimal) period of $f(\cdot)$, again we know from Lemma \ref{lemma3} that $\exists j \in \mathbb{N},$ so that $T = j T_2.$ Therefore, $\frac{n}{k} T_2 = j T_2,$ and we observe $\frac{n}{k}\in \mathbb N$. 
 Since $T < T_3 = kT$, we know $k > 1$. Since $m,n$ are relatively prime and $\frac{n}{k} \in \mathbb{N},$ we know $\frac{m}{k} \notin \mathbb{N}.$

    Since $T$ is an integer multiple of $T_2$, we make an analogous argument to \ref{matrix}. It then follows that \begin{equation}\int_0^{\frac{n}{k} T_2}f(\tau)\cos(\omega_0 \tau) d\tau = 
\int_0^{\frac{n}{k} T_2}  f(\tau)\sin(\omega_0 \tau) d\tau = 0.\end{equation}
We want to show $\int_0^t f(\tau)\sin(\omega_0 \tau) d\tau = \int_0^t f(\tau)\cos(\omega_0 \tau) d\tau = 0.$ 
We repeat the lines \ref{laterreference}--\ref{laterreference1}, replacing $nT_2$ with $\frac{n}{k} T_2$. Therefore, we obtain, via the sum-to-product formulae, {\small
\begin{equation*}
    0 = \cos\left(\omega_0\left(t-\frac{T}{2}\right)\right)\sin\left(\omega_0 \frac{T}{2}\right)\int_0^t f(\tau)\cos(\omega_0 \tau) d\tau + \sin\left(\omega_0\left(t-\frac{T}{2}\right)\right)\sin\left(\omega_0 \frac{T}{2}\right)\int_0^t f(\tau)\sin(\omega_0 \tau) d\tau
\end{equation*}}
for $t \geq T/2$. Since 
$$\omega_0 \frac{T}{2} = \frac{2\pi}{T_1} \frac{\frac{n}{k} T_2}{2} = \pi \frac{n}{k} \frac{m}{n} = \pi \frac{m}{k},$$
and $\frac{m}{k} \notin \mathbb N$, we know $\sin(\omega_0 \frac{T}{2}) \neq 0.$ Therefore, we divide by this quantity to obtain
\begin{equation}
    0 = \cos(\omega_0(t-\frac{T}{2}))\int_0^t f(\tau)\cos(\omega_0 \tau) d\tau + \sin(\omega_0(t-\frac{T}{2}))\int_0^t f(\tau)\sin(\omega_0 \tau) d\tau
\end{equation}
 for $t \geq T/2$. Similarly to \ref{bigone1}, repeating the same reasoning but with $\dot x(t) = \dot x(t+n T_2)$, we derive
\begin{equation}
    0 = \sin(\omega_0(t-\frac{T}{2}))\int_0^t f(\tau)\cos(\omega_0 \tau) d\tau -\cos(\omega_0(t-\frac{T}{2}))\int_0^t f(\tau)\sin(\omega_0 \tau) d\tau
\end{equation}
with $t \geq T/2$. As before, we obtain the system
\begin{equation}
\begin{bmatrix}
\cos(\omega_0(t-\frac{T}{2})) & \sin(\omega_0(t-\frac{T}{2}))\\
\sin(\omega_0(t-\frac{T}{2})) & -\cos(\omega_0(t-\frac{T}{2}))
\end{bmatrix}
\begin{bmatrix}
\int_0^{t}f(\tau)\cos(\omega_0 \tau) d\tau\\
\int_0^{t}  f(\tau)\sin(\omega_0 \tau) d\tau
\end{bmatrix}
=
\begin{bmatrix}
    0\\
    0
\end{bmatrix}.
\end{equation}
The above matrix is invertible, so $\int_0^{t}f(\tau)\cos(\omega_0 \tau) d\tau = 
\int_0^{t}  f(\tau)\sin(\omega_0 \tau) d\tau = 0.$ Therefore, $x(t) \equiv 0, t \geq T/2.$ Since $x(t) = x(t+T), \forall t \in \mathbb{R_{+}},$ we know $x(t) \equiv 0, \forall t \in \mathbb{R_{+}}.$ This violates the assumption that $x(t)$ has a period $0<T < T_3$. Hence, x has a (minimal) period of $T_3$.

\subsection{Proof of (3.) in Theorem \ref{th:main}}

We now suppose that $\alpha = \frac{m}{n} \in \mathbb{N},$ and  $$(f(t),\cos(\omega_0 t))_{L^2(0,T_3)} = (f(t),\sin(\omega_0 t))_{L^2(0,T_3)} = 0.$$
We will then show that  $x(\cdot) \in L^{\infty}(\mathbb R_+)$ is periodic with period $T_3$. Since $m/n \in \mathbb N$, we know that there is some $l \in \mathbb N$ so that $T_3=T_2=lT_1$. 
We will show that $\forall t \in [0,\infty), x(t)=x(t+T_3).$ 

By the variation of parameters formula, with $x(0) = \dot x(0) = 0,$ we  write
\begin{equation*} x(t+T_3) = \dfrac{\sin(\omega_0 (t+T_3))}{\omega_0}\int_0^{t+T_3}\cos(\omega_0\tau)f(\tau)d\tau  -\dfrac{\cos(\omega_0 (t+T_3))}{\omega_0}\int_0^{t+T_3}\sin(\omega_0\tau)f(\tau)d\tau.\end{equation*}

   Since $lT_1=T_3$, and $\sin(\omega_0t), \cos(\omega_0t)$ are periodic with period $T_1$, we know they are also periodic with period $T_3$, which allows us to simplify again as

\begin{align*}
x(t+T_3) =&~ \dfrac{\sin(\omega_0 t)}{\omega_0}\left(\int_0^{t}\cos(\omega_0\tau)f(\tau)d\tau+\int_t^{t+T_3}\cos(\omega_0\tau)f(\tau)d\tau\right) \\
    & - \dfrac{\cos(\omega_0 t)}{\omega_0}\left(\int_0^{t}\sin(\omega_0\tau)f(\tau)d\tau+\int_t^{t+T_3}\sin(\omega_0\tau)f(\tau)d\tau\right). 
\end{align*}  
As in the previous section, we note that
$$\dfrac{d}{dt}\left(\int_t^{t+T_3}\cos(\omega_0\tau)f\tau d\tau\right)=\cos(\omega_0(t+T_3))f(t+T_3)-\cos(\omega_0t)f(t) =0,$$
since both $\cos(\omega_0t),~f(t)$ are periodic with coincident period $T_3$. 
Hence we know this value is constant, and equal to its value at $t=0$, so 
$$\int_t^{t+T_3}\cos(\omega_0\tau)f(\tau) d\tau= \int_0^{T_3}\cos(\omega_0\tau)f(\tau)d\tau.$$
Similarly, we obtain 
$$\int_t^{t+T_3}\sin(\omega_0\tau)f(\tau) d\tau= \int_0^{T_3}\sin(\omega_0\tau)f(\tau)d\tau.$$
Both of these integral quantities are zero, by hypothesis, and hence we obtain the desired identity
  \begin{align*}
x(t+T_3) =&~ \dfrac{\sin(\omega_0 t)}{\omega_0}\int_0^{t}\cos(\omega_0\tau)f(\tau)d\tau \\
    & - \dfrac{\cos(\omega_0 t)}{\omega_0}\int_0^{t}\sin(\omega_0\tau)f(\tau)d\tau\\ =&~ x(t). 
\end{align*}  
    Since this identity holds for all $t \in [0,\infty),$ $x(\cdot)$ is periodic with period at most $T_3$. As before, since $x\in C([0,T_3])$ as the ODE solution, we know that it is bounded, and hence, its periodic extension to all of $\mathbb R_+$ is also bounded; this gives that $x \in L^{\infty}(\mathbb R_+)$. 
    
    Now we show the minimality of $T_3$ so that $T_3$ is the period of $x(\cdot)$. Assume to the contrary that $T \in \mathbb{R_{+}},$ with $T < T_3$, is a period of $x(\cdot)$. 
    Since,  $ \forall t \in \mathbb{R_{+}},$ we have $x(t) = x(t+T_3),$ we know that, by Lemma \ref{lemma3}, $\exists k \in \mathbb{N}, T_3 = kT.$ Therefore, $T = \frac{1}{k} T_3 = \frac{n}{k} T_2 = \frac{m}{k} T_1.$ We also have that $x(t) = x(t+T)$, and, as argued before, $T$ is a period of $f(\cdot)$; since $T_2$ is the (minimal) period of $f(\cdot)$, again we know from Lemma \ref{lemma3} that $\exists j \in \mathbb{N},$ so that $T = j T_2.$ Therefore, $\frac{n}{k} T_2 = j T_2,$ and we observe $\frac{n}{k}\in \mathbb N$. Now, since $\frac{m}{n} \in \mathbb N$ by hypothesis, we know that $\frac{n}{k}\times\frac{m}{n} = \frac{m}{k}\in \mathbb N$. Therefore, $k$ properly divides both $m$ and $n$. Moreover, since $T < T_3 = \frac{1}{k} T,$ we know $k > 1$, which yields that $\gcd(m,n) \geq k > 1$ contradicting that $\alpha = m/n \in \mathbb N$ was in lowest terms. 
   This contradiction yields that $T_3 \leq T$, and $T_3$ is the (minimal) period of $x(\cdot)$.
    
\subsection{Proof of (4.) in Theorem \ref{th:main}}
We now make the same assumption that  $\alpha = \frac{m}{n} \in \mathbb{N},$ and so there is $l \in \mathbb N$ so that $T_3=T_2=lT_1$. 
However, we will assume that $(f(t),\cos(\omega_0 t))_{L^2(0,T_3)} \neq 0$ or $(f(t),\sin(\omega_0 t))_{L^2(0,T_3)} \neq 0$. We will give the proof for $(f(t),\sin(\omega_0 t))_{L^2(0,T_3)} \neq 0$, of course not making any assumptions about the value of $(f(t),\sin(\omega_0 t))_{L^2(0,T_3)}$. The proof is then easily modified for the other case. We will show that  $x(t)$ is resonant when  $(f(t),\sin(\omega_0 t))_{L^2(0,T_3)} \neq 0$; in particular, given a value $L \in \mathbb R_+$, we will produce a time $t_1(L)$ so that $|x(t_1)|>L$. 

To that end, let $L \in \mathbb R_+$. Also, denote  $Q_1 = (f(t),\sin(\omega_0 t))_{L^2(0,T_3)}$. 
Then take $$t_1 = T_3 \left(\left\lfloor \frac{\omega_0 L}{|Q_1|} \right\rfloor+2\right)>0,$$ where we utilize the standard floor function.
Invoking the variation of parameters formula, we have
\begin{equation}x(t_1) =\dfrac{\sin(\omega_0t_1)}{\omega_0}\int_0^{t_1}\cos(\omega_0\tau)f(\tau)d\tau-\dfrac{\cos(\omega_0t_1)}{\omega_0}\int_0^{t_1}\sin(\omega_0\tau)f(\tau)d\tau.\end{equation} 
Since $T_3=T_2=lT_1=\frac{2\pi l}{\omega_0}$ we have
$$\sin(\omega_0 t_1) = \sin\left(\omega_0 T_3 \left(\left\lfloor \frac{\omega_0 L}{|Q_1|} \right\rfloor+2\right)\right)=\sin\left(2\pi l\left(\left\lfloor \frac{\omega_0 L}{|Q_1|} \right\rfloor+2\right)\right).$$
Above, $l\lfloor \frac{\omega_0 L}{|Q_1|} \rfloor+2 \in \mathbb N$, so $\sin(\omega_0t_1)= 0$ and $\cos(\omega_0t_1)=1$. Therefore, the expression for $x(t)$  becomes 
\begin{equation}x(t_1) = -\dfrac{1}{\omega_0}\int_0^{t_1}\sin(\omega_0\tau)f(\tau)d\tau.\end{equation}
Since we know that $t_1 = T_3(\lfloor \frac{\omega_0 L}{|Q_1|} \rfloor+2)$ and $\lfloor \frac{\omega_0 L}{|Q_1|} \rfloor+2 \in \mathbb N$, we break the integral accordingly. 
\begin{equation*}
  \int_0^{t_1}\sin(\omega_0\tau)f(\tau)d\tau=   \int_0^{T_3} f(\tau)\sin(\omega_0 \tau)d\tau + \int_{T_3}^{2T_3} f(\tau)\sin(\omega_0 \tau)d\tau + ... + \int_{(\lfloor \frac{\omega_0 L}{|Q_1|} \rfloor+1)T_3}^{(\lfloor \frac{\omega_0 L}{|Q_1|} \rfloor+2)T_3} f(\tau)\sin(\omega_0 \tau)d\tau.
\end{equation*}
As we have done in previous sections, we use substitution and the coincident period $T_3=T_2=lT_1$, to write
\begin{equation}
     \int_0^{t_1}\sin(\omega_0\tau)f(\tau)d\tau= \left(\left\lfloor \frac{\omega_0 L}{|Q_1|} \right\rfloor+2\right)\int_0^{T_3} f(\tau)\sin(\omega_0 \tau)d\tau.
\end{equation}

Since we have denoted $Q_1 = (f(t),\sin(\omega_0 t))_{L^2[0,T_3]} = \int_0^{T_3}\sin(\omega_0\tau)f(\tau)d\tau,$ 
we have
\begin{equation}
 |x(t_1)|  =\dfrac{1}{\omega_0} \left|  \int_0^{t_1}\sin(\omega_0\tau)f(\tau)d\tau\right|= \dfrac{1}{\omega_0}\left(\left\lfloor \frac{\omega_0 L}{|Q_1|} \right\rfloor+2\right)|Q_1|.
\end{equation}
But it follows from the definition of the floor function that
\begin{equation}0< L < |x(t_1)|\end{equation}
as desired.

The case where $Q_2 \equiv (f(t),\cos(\omega_0 t))_{L^2(0,T_3)} \neq 0$ is handled by making a modified choice of $t_1$. With $L>0$, $a = \int_0^{\frac{T_1}{4}} f(\tau)\cos(\omega_0 \tau) d\tau$, and $\lceil \cdot \rceil$ denoting the ceiling function, we define
$$t_1 = T_3\left(\left\lceil \frac{\omega_0 L + |a|}{|Q_2|} \right\rceil + 1\right) + \frac{T_1}{4} = \frac{2\pi l}{\omega_0}\left(\left\lceil \frac{\omega_0 L + |a|}{|Q_2|} \right\rceil + 1\right) + \frac{\pi}{2\omega_0}. $$ 

In this case, we have that $\omega_0t_1-\frac{\pi}{2} \in 2\pi \mathbb N$. In this case, 
$$\cos(\omega_0t_1)=0,~~\sin(\omega_0t_1)=1,$$
and the proof that $|x(t_1)|>L$ proceeds similarly as in the previous case. We know $\left| x(t_1) \right|$ can be represented as

\begin{equation}
    |x(t_1)| = \left|\frac{1}{\omega_0} \int_0^{t_1} f(\tau)\cos(\omega_0 \tau) d\tau\right|.
\end{equation}
 As with the previous case, we  use substitution and the coincident period to write the integral above as

\begin{equation}
    \left[
    \int_0^{T_3\left(\left\lceil \frac{\omega_0 L + |a|}{|Q_2|} \right\rceil + 1\right)} 
     + \int_{T_3\left(\left\lceil \frac{\omega_0 L + |a|}{|Q_2|} \right\rceil + 1\right)}^{T_3\left(\left\lceil \frac{\omega_0 L + |a|}{|Q_2|} \right\rceil + 1\right) + \frac{T_1}{4}} \right] f(\tau)\cos(\omega_0 \tau) d\tau = \left(\left\lceil \frac{\omega_0 L + |a|}{|Q_2|} \right\rceil + 1\right) Q_2 + a.
\end{equation}

It follows from the definition of the ceiling function that $L < \frac{|Q_2|}{\omega_0} \left(\left\lceil \frac{\omega_0 L + |a|}{|Q_2|} \right\rceil + 1\right) - \frac{|a|}{\omega_0}.$ This then leads us to the conclusion that

\begin{equation}
    L < \left| \frac{|Q_2|}{\omega_0} \left(\left\lceil \frac{\omega_0 L + |a|}{|Q_2|} \right\rceil + 1\right) - \frac{|a|}{\omega_0} \right| \leq \left| \frac{1}{\omega_0} \left(\left\lceil \frac{\omega_0 L + |a|}{|Q_2|} \right\rceil + 1\right) Q_2 + \frac{a}{\omega_0} \right| = |x(t_1)|.
\end{equation}

\section{Appendix: Laplace Transform Approach}
We recall the classical Laplace transform for exponentially bounded functions \cite{farlow}. Namely, that $f: [0,\infty)\to \mathbb R$ is locally integrable, and suppose there exists constants $a,C\ge 0$ so that
$$|f(t)| \le Ce^{at},~~\forall~t \in [0,\infty).$$ Then we define $\mathcal L[f](s) \equiv F(s)$ as
$$F(s) \equiv \int_0^{\infty}e^{-st}f(t)ds,~~\forall ~s \in [a,\infty).$$
A standard topic in undergraduate ODE courses is the use of the Laplace transform to solve second order linear ODEs with forcing, including those with discontinuous forcing functions. The transform is applied to the Cauchy problem, and once an algebraic expression is found for $X(s)$ (the solution), this expression simply needs to be inverted. In general, this is a challenging process---see {\em Mellin's Formula} or the discussion in an elementary ODE textbook \cite{farlow,BdP}. 

For the oscillator system of interest here, 
\begin{equation} \begin{cases}
\ddot{x}+\omega_0^2x=f(t),  \\
x(0)=0;~~\dot{x}(0)=0.
\end{cases}
\end{equation}
We can take the Laplace transform of both sides, resulting in the algebraic expression
$$s^2X(s)+\omega_0^2X(s)=F(s)~~\implies~~X(s)=\dfrac{F(s)}{s^2+\omega_0^2}.$$
In order to obtain an expression for the unique solution, or at least to analyze its properties, it is necessary to invert 
$$x(t)=\mathcal L^{-1}\left[\dfrac{F(s)}{s^2+\omega_0^2}\right](t),$$
when possible. 

In the case that $f$ is periodic, we note a helpful identity that aids in the inversion process. Using an argument akin to those presented in the proof of the main theorem, we have: 
\begin{lemma}
Suppose that $f: \mathbb [0,\infty) \to \mathbb R$ is bounded, locally-integrable, and periodic with period $T>0$. Then 
$$F(s)=\mathcal L[f](s) = \dfrac{\displaystyle \int_0^Te^{-st}f(t)dt}{1-e^{-sT}}.$$
\end{lemma}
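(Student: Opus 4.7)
The plan is to compute $F(s)$ directly from the definition by decomposing the half-line $[0,\infty)$ into successive periods and exploiting the periodicity of $f$. First I would write
\[
F(s) = \int_0^\infty e^{-st}f(t)\,dt = \sum_{k=0}^{\infty}\int_{kT}^{(k+1)T} e^{-st}f(t)\,dt,
\]
which is valid for $s>0$ since $f$ is bounded (hence trivially exponentially bounded with $a=0$), making the improper integral absolutely convergent. On each piece I would substitute $u = t-kT$ and invoke periodicity, $f(u+kT) = f(u)$, to obtain
\[
\int_{kT}^{(k+1)T} e^{-st}f(t)\,dt = e^{-skT}\int_0^{T} e^{-su}f(u)\,du.
\]

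Next I would pull the $u$-integral out of the sum, leaving a geometric series in $k$:
\[
F(s) = \left(\int_0^T e^{-su}f(u)\,du\right)\sum_{k=0}^{\infty} e^{-skT}.
\]
Since $s>0$ and $T>0$, we have $|e^{-sT}|<1$, so the geometric series converges to $(1-e^{-sT})^{-1}$, yielding the claimed identity. The interchange of summation and integration just performed is justified by absolute convergence (or, equivalently, by Fubini/Tonelli applied to the nonnegative integrand $|e^{-st}f(t)|\le \|f\|_{L^\infty(0,T)} e^{-st}$).

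The only mild obstacle is bookkeeping about the domain of $s$: one should note that the formula extends by analytic continuation beyond $s>0$ to all $s$ in the right half-plane away from the poles $s_k = 2\pi i k/T$, but for our elementary purposes (real $s$ large enough that the Laplace integral converges absolutely) the argument above suffices. The proof requires no new machinery beyond the geometric series and a substitution, closely mirroring the periodicity arguments used in the proof of Theorem \ref{th:main}.
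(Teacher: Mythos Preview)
Your proof is correct and follows essentially the same approach as the paper: decompose $[0,\infty)$ into periods, substitute $u=t-kT$ and use periodicity on each piece, then sum the resulting geometric series. Your additional remarks on absolute convergence and the domain of $s$ are a bit more careful than the paper's version, but the core argument is identical.
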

\begin{proof}
We consider the integral defining the transform and break it at multiples of the period of $f$:
\begin{equation}
\int_0^{\infty}e^{-st}f(t)dt = \sum_{k=0}^{\infty} \int_{kT}^{(k+1)T}e^{-st}f(t)dt.
\end{equation}
Let $u = t-kT$ and change variable in the integrand to obtain
$$\int_{kT}^{(k+1)T}e^{-st}f(t)dt = \int_0^Te^{-s(u+kT)}f(u+kT)du=e^{-skT}\int_0^Te^{-su}f(u)du,$$
using the $T$-periodicity of $f$. 
Thus, we may rewrite the desired integral as a geometric series, which yields the result.
\begin{equation}
\int_0^{\infty}e^{-st}f(t)dt =  \left(\int_{0}^{T}e^{-st}f(t)dt\right)\sum_{k=0}^{\infty}e^{-skT} =\dfrac{\ds\int_{0}^{T}e^{-st}f(t)dt}{1-e^{-sT}}.
\end{equation}
\end{proof}
Using the above lemma, we can convert the question of resonance into a simple statement: Given 
$$x(t) = \mathcal L^{-1}\left[ \frac{ \int_0^Te^{-st}f(t)dt}{(1-e^{-sT})(s^2+\omega_0^2)} \right] (t),$$
we have that $x(t)$ is {\em not resonant} if and only if $\mathcal L^{-1}\left[ \dfrac{ \int_0^Te^{-st}f(t)dt}{(1-e^{-sT})(s^2+\omega_0^2)} \right]  \in L^{\infty}([0,\infty)).$ Of course, as mentioned above, inversion formulae for the Laplace transform are non-trivial; on the other hand, this expression provides a check that does not directly involve a numerical ODE solve.

\footnotesize

\end{document}